\documentclass[12pt,twoside]{amsart} 
 \title[On dual F-signature]{On dual F-signature}

\author[A. Sannai]{Akiyoshi Sannai}

\address{Graduate School of Mathematics, Nagoya University Chikusa-ku, Nagoya 464–8602 JAPAN.}

\email{x12004h@math.nagoya-u.ac.jp}

\date{\today}
\subjclass[2010]{Primary 14J45; Secondary 13A35, 14B05, 14E30.}
\keywords{F-signature, F-regular, F-rational}

\usepackage{pxfonts}

\usepackage{latexsym}
\usepackage{amsmath}
\usepackage{amssymb}
\usepackage{amsthm}
\usepackage{amscd}
\usepackage{enumerate} 
\usepackage{amssymb} 
\usepackage{mathrsfs}          
\usepackage[all]{xy}
\usepackage{color}

\usepackage[dvipdfm,bookmarks=true,bookmarksnumbered=true,%
 pdftitle={},%
 pdfsubject={},%
 pdfauthor={},%
 pdfkeywords={TeX; dvipdfmx; hyperref; color;},%
 colorlinks=true]{hyperref}

\newtheorem{thm}{Theorem}[section]

\newtheorem{prop}[thm]{Proposition}

\newtheorem{lem}[thm]{Lemma}

\newtheorem{cor}[thm]{Corollary}

\newtheorem{conj}[thm]{Conjecture}

\newtheorem{ques}[thm]{Question}

\theoremstyle{definition}

\newtheorem{defi}[thm]{Definition}

\newtheorem{eg}[thm]{Example}

\newtheorem*{ack}{Acknowledgments}

\theoremstyle{definition}
\newtheorem{rem}[thm]{Remark}

\begin{document}
\bibliographystyle{amsalpha+}
 
 \maketitle
 
\begin{abstract}

We define the dual F-signature of modules, which is equivalent to the F-signature if the module is the base ring.
By using this invariant, we give characterizations of regular, F-regular, F-rational, and Gorenstein singularities.
\end{abstract}

\section{Introduction}
Let $R$ be complete d-dimensional reduced Noetherian local ring with prime characteristic $p > 0$ and perfect residue field $k = k^p$. There is the Frobenius map $F : R \rightarrow R$ sending $r$ to $r^p$. For $e  \in \mathbb{N}$ the inclusion $R \subseteq R^{1/p^e} $into the corresponding ring of $p^e$-th roots of elements of R is naturally identified with the $e$-th iterate of the Frobenius endomorphism. The $R$-module $R^{1/p^e}$ has important information about the singularity of $R$. Write $R^{1/p^e} = R^{\oplus a^e} \oplus M_e$ as $R$-modules where $M_e$ has no free direct summands. The number $a_e$ is called the $e$-$th$ $Frobenius$ $splitting$ $number$ $of$ $R$. Kunz showed that $a_e=p^{ed}$ holds for some $e$ if and only if $R$ is regular \cite{KunzCharacterizationsOfRegularLocalRings}. This result also tells us that the ratio of  the rank of the free direct summand $a_q$ to the rank of $R^{1/p^e}=p^{ed}$ reflects the distance to the regularity. We define the $F$-signature by the asymptotic behavior of the sequence $\{a_e/p^{de}\}$, namely, $s(R) = \lim_{e \to \infty} \frac{a_{e}}{p^{ed}}$. The $F$-signature first appeared implicitly
in the work of K. Smith and M. Van den Bergh
\cite{SmithVanDenBerghSimplicityOfDiff}, and its formal study was started in \cite{HunekeLeuschkeTwoTheoremsAboutMaximal}.  
Though the existence of the limit had been open for several years, K.Tucker showed the existence in general \cite{TuckerFSignatureExists}. \\
  C. Huneke and G. J. Leuschke showed that $s(R) \le 1$ and the equality holds if and only if $R$ is regular\cite{HunekeLeuschkeTwoTheoremsAboutMaximal}. In their paper, they believed that $s(R)$  characterizes  $F$-rationality of $R$ by its positivity. But I. Aberbach and G. J. Leuschke  showed $s(R)>0$ if and only if $R$  is strongly $F$-regular. In this paper, we prove that what they believe is the right idea, namely, $F$-rationality is characterized by how the canonical module relates to iterated Frobenius powers. To do this, we extend this invariant $s(R)$ to modules. 
  
\begin{defi}
Let $(R, \mathfrak{m}, k)$ be a reduced $F$-finite local ring  of characteristic $p>0$ and $M$ be an $R$-module. For each natural number $e$, put $q=p^e, \alpha = log_p[k:k^p]$, and $ b_{q} = \max \{  \ n  \ |  \  ^\exists  F_*^eM \twoheadrightarrow  M^n  \ \}$
and define  \\
\[s(M) = \lim \sup_{e \to \infty}\frac{b_q}{q^{dimR+\alpha}}
\]

We call $b_q$ the $F$-$surjective$ $number$ $of$ $M$ and call $s(M)$ $the$ $dual$ $F$-$signature$ $of$ $M$.
\end{defi}
The point is that though the $F$-signature depends only on the $R$-module structure of $F_*^eR$, the dual $F$-signature depends not on the $R$-module structure of $F_*^eM$ but on the relative structure of $F_*^eM$ and $M$. It is easy to see the dual $F$-signature of $R$ is equivalent to $F$-signature.  By using dual $F$-signature, we have the characterization of the singularities of $R$.
\begin{thm}\label{main}
Let $(R, \mathfrak{m}, k)$ be a reduced F-finite Cohen-Macaulay local ring of characteristic $p>0$ and $\omega_R$ be the canonical module, then the following hold:
\begin{enumerate}
\item The following are equivalent.
\begin{enumerate}
\item$R$ is regular. 
\item$s(R)=1$
\item$s(\omega_R)=1$
\end{enumerate}
\item $R$ is strongly F-regular if and only if $s(R)>0$
\item Assume $k$ is infinite, then $R$ is F-rational if and only if $s(\omega_R)>0$
\item $s(\omega_R)\ge s(R)$
\item Futhermore, assume $s(\omega_R)>0$, then\\
$R$ is Gorenstein if and only if $s(\omega_R)=s(R)$
\end{enumerate}

 \end{thm}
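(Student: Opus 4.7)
Part (2) is Aberbach--Leuschke, quoted directly. The remaining parts are driven by the Cohen--Macaulay duality isomorphism
\[
F_*^e \omega_R \;\cong\; \operatorname{Hom}_R(F_*^e R,\,\omega_R),
\]
the vanishing $\operatorname{Ext}^{i}_R(\omega_R,\omega_R)=0$ for $i>0$, and the fact that $\operatorname{Hom}_R(-,\omega_R)$ is an involutive exact functor on maximal Cohen--Macaulay (MCM) modules. Applied to the decomposition $F_*^e R = R^{a_e}\oplus M_e$, duality yields
\[
F_*^e \omega_R \;\cong\; \omega_R^{a_e}\oplus \operatorname{Hom}_R(M_e,\omega_R),
\]
so $b_{p^e}\ge a_{p^e}$ and $s(\omega_R)\ge s(R)$, proving (4). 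When $R$ is regular (respectively Gorenstein), $\omega_R\cong R$ and the decomposition collapses to the $F$-signature setting, giving (a) $\Rightarrow$ (c) in (1) and one direction of (5); Huneke--Leuschke supplies (a) $\Leftrightarrow$ (b) in (1).

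For the converses in (1) and (5), I plan to dualize: a surjection $F_*^e\omega_R\twoheadrightarrow \omega_R^n$ with kernel $K$ yields, using the $\operatorname{Ext}$-vanishing above and the MCM property of $F_*^e R$, a short exact sequence
\[
0\to R^n \to F_*^e R \to \operatorname{Hom}_R(K,\omega_R)\to 0.
\]
Hence $b_q$ counts maximal embeddings $R^n\hookrightarrow F_*^e R$ whose cokernel satisfies $\operatorname{Ext}^1(-,\omega_R)=0$. For (c) $\Rightarrow$ (a) in (1), the hypothesis $s(\omega_R)=1$ forces $n/q^{d+\alpha}\to 1$, so the cokernel has generic rank tending to zero; combined with the depth condition on the cokernel and a length comparison in codimension one, this makes the cokernel vanish for large $e$, so $F_*^e R$ becomes free and Kunz gives regularity. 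For the converse in (5), part (2) guarantees strong F-regularity, and the decomposition shows that any gap $b_q-a_q$ comes from $\omega_R$-surjections of $\operatorname{Hom}_R(M_e,\omega_R)$; equality $s(\omega_R)=s(R)$ forces these to be asymptotically negligible, and a summand-rigidity argument over the strongly F-regular ring $R$ promotes this to $\omega_R\cong R$, i.e., Gorensteinness.

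For part (3) with $k$ infinite, I use Matlis duality to translate between $\omega_R$ and $H^d_\mathfrak{m}(R)$. For ($\Leftarrow$): the surjections produced by $s(\omega_R)>0$ dualize, as above, to embeddings $R\hookrightarrow F_*^eR$ by multiplication by elements of $F_*^e R$; via local duality these induce $R$-semilinear Frobenius operators on $H^d_\mathfrak{m}(R)$ that are injective on prescribed socle classes, and asymptotic abundance forces $0^*_{H^d_\mathfrak{m}(R)}=0$, which is F-rationality in the CM case. For ($\Rightarrow$): a single parameter test element gives one good surjection; the infinite residue field permits sufficiently general test elements, and a generic-freeness argument on $F_*^e\omega_R$ assembles $\Theta(q^{d+\alpha})$ linearly independent surjections, yielding $s(\omega_R)>0$.

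\textbf{Main obstacle.} The hardest step is the converse direction of (3): amplifying a single parameter test element into a surjection count comparable with $q^{d+\alpha}$ requires both the infinite residue field hypothesis and careful control of Frobenius generic freeness for $F_*^e\omega_R$. The converse in (5) is also delicate, since an equality of real-valued asymptotic invariants must be lifted to the honest module isomorphism $\omega_R\cong R$; here the strong F-regularity coming from (2) is essential to prevent pathological summand behavior of MCM modules.
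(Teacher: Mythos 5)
Your overall architecture — setting up the $\omega_R$-duality between surjections $F_*^e\omega_R\twoheadrightarrow\omega_R^n$ and embeddings $R^n\hookrightarrow F_*^eR$ with MCM cokernel, and quoting Aberbach--Leuschke for part (2) — matches the paper exactly, and your observation for part (4) (that the free summand decomposition dualizes to an $\omega_R$-summand decomposition) is a correct and arguably cleaner route than the paper's. But the converse directions of (1), (3), and (5) are where the real content lies, and in each case your sketch has a genuine gap.

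For (1)(c)$\Rightarrow$(a): you claim that $s(\omega_R)=1$ forces the cokernel of $R^{b_q}\hookrightarrow F_*^eR$ to vanish for large $e$ because its ``generic rank tends to zero.'' This is not correct: only the ratio $(\text{rank of cokernel})/q^{d+\alpha}\to 0$; the absolute rank of the cokernel can be unbounded, and since the cokernel is MCM and nonzero it contributes nontrivial length even when $b_q$ is close to $q^{d+\alpha}$. The paper instead chooses a minimal reduction $\underline{x}$ of $\mathfrak{m}$ and compares lengths after tensoring by $R/\underline{x}$ and $R/\mathfrak{m}$; the MCM property of the cokernel keeps the relevant row exact, and passing to the limit yields $e_{HK}(\underline{x},R)-e_{HK}(\mathfrak{m},R)\ge e(\mathfrak{m})-1$, hence $e_{HK}(\mathfrak{m},R)=1$ and regularity by Watanabe--Yoshida. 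This Hilbert--Kunz comparison is exactly the missing ingredient.

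For (3)($\Rightarrow$): your plan — ``a single parameter test element gives one good surjection; generic freeness assembles $\Theta(q^{d+\alpha})$ linearly independent surjections'' — is precisely where a proof is needed but none is supplied. The paper's mechanism is qualitatively different: it invokes the Hochster--Yao $r$-signature theorem (F-rational $\Leftrightarrow$ $r_R(R)>0$) to get a uniform positive lower bound on $e_{HK}(\underline{x},R)-e_{HK}((\underline{x},\Delta),R)$, then runs a proof by contradiction. Assuming $s(\omega_R)=0$, the kernel $K$ of the optimal surjection must absorb almost all of the Hilbert--Kunz drop, which (via Lemma \ref{length} bounding $\mathrm{Tor}_1$ and Lemma \ref{matlis} on Matlis duality) yields $l(\Delta(K/\underline{x}K))>\dim_k\mathrm{soc}(R/\underline{x})$ for \emph{every} socle element $\Delta$; the elementary linear-algebra Lemma \ref{inj} (using $k$ infinite) then produces an embedding $R/\underline{x}\hookrightarrow\mathrm{Hom}(K/\underline{x}K,E)$, hence a surjection $K\twoheadrightarrow\omega_R$, contradicting maximality of $b_q$ via Remark \ref{max}. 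There is no straightforward ``generic freeness'' shortcut here, and your sketch omits the entire socle combinatorics.

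For (5): your appeal to a ``summand-rigidity argument over the strongly F-regular ring'' is again only a label. The paper's argument is concrete: decompose $F_*^e\omega_R\cong R^{a_q}\oplus\omega_R^{b_q}\oplus M$ with $M$ having no free or $\omega_R$-summand, apply Tucker's theorem $\lim a_q/q^{d+\alpha}=s(R)$, dualize to get $\lim b_q/q^{d+\alpha}=s(R)$, and then exploit the surjection $R^{\mu(\omega_R)}\twoheadrightarrow\omega_R$ to convert the $R^{a_q}$-summand into roughly $a_q/\mu(\omega_R)$ additional copies of $\omega_R$, giving $s(\omega_R)\ge s(R)+s(R)/\mu(\omega_R)>s(R)$, a contradiction. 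You should replace the rigidity heuristic with this explicit estimate.

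In summary: the duality framework and part (4) are right and match the paper; parts (1)(c)$\Rightarrow$(a), (3)($\Rightarrow$), and (5)($\Leftarrow$) require, respectively, a Hilbert--Kunz minimal-reduction comparison, the Hochster--Yao $r$-signature together with a socle/linear-algebra lemma, and Tucker's rank formula plus a $\mu(\omega_R)$ surjection count — none of which your sketch supplies, and the cokernel-vanishing claim in (1) is actually false as stated.
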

\begin{rem}
The F-finiteness of $R$ implies the existence of the dualizing complex by Gabber \cite{gabber}. Therefore, there always exists a canonical module under the assumption in the theorem above.
\end{rem}
\begin{rem}
Though the theorem is only for absolute version, we can also generalize $F$-signature of pair, $F$-splitting ratio, and $s$-dimension in the dual situation. See the section 4.
\end{rem}
\begin{ack}
The author would like to express his gratitude
to Professor Mitsuyasu Hashimoto for valuable discussions and helpful comments. The author would like to thank Professor Vasudevan Srinivas and Professor Kenichi Yoshida for examples about the dual $F$-signature. The author also would like to thank Professor Yujiro Kawamata for his suggestion about pair of the dual $F$-signature.  The author is partially supported by JSPS postdoctoral fellow.

\end{ack}

\section{preliminary}
We first review the theory of Hilbert-Kunz multiplicity. Let $(R,\mathfrak{m}, k)$ be a local Noetherian ring of prime
characteristic $p$, $I$ an $\mathfrak{m}$-primary ideal and $M$ an finitely generated $R$-module. The
 Hilbert--Kunz function of $M$ along $I$ is the function taking
an integer $n$ to the length of $R/I^{[p^n]}\otimes M$, where
$I^{[p^n]}$ is the ideal generated by all the $p^n$th powers of
elements of $I$. The Hilbert--Kunz multiplicity of $M$ along $I$, denoted
$e_{HK}(I, M)$, is ${\displaystyle \lim_{q = p^n
\rightarrow \infty}}
\frac{l(R/I^{[q]}\otimes M)}{q^d}$,
where $l(M)$ is the length of $M$. The limit always exists by the following theorem .
\begin{thm} \cite{MonskyHKFunction}
Suppose $(R,\mathfrak{m},k)$ is a local ring of dimension
$d$ and characteristic $p>0$.  If $I$ is any $\mathfrak{m}$-primary ideal and
$M$ is a finitely generated $R$-module, then
the limit 
\[
e_{HK}(I;M) := \lim_{e\to\infty}\frac{1}{p^{ed}} l_{R}(R/I^{[p^{e}]} _{R}\otimes M)
\]
exists and is called the \emph{Hilbert-Kunz multiplicity of $M$ along $I$}.
\end{thm}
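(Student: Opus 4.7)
The plan is to show that the sequence $a_e/p^{ed}$ with $a_e := l_R(R/I^{[p^e]}\otimes M)$ is Cauchy, by inducting on $d = \dim R$ and estimating the comparison $|a_{e+1} - p^d a_e|$.

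First I would reduce to the case where $R$ is a complete local domain and $M = R$. Completion is harmless, since $\widehat{R}$ is faithfully flat over $R$ and the relevant lengths are preserved, and in addition Cohen's structure theorem becomes available. Using a prime filtration $0 = M_0 \subset M_1 \subset \cdots \subset M_n = M$ with $M_i/M_{i-1}\cong R/P_i$, additivity of length reduces the problem to $M = R/P$ for each prime $P$. When $\dim R/P < d$, one has $l_R(R/(I^{[p^e]}+P)) = O(p^{e\cdot\dim(R/P)}) = o(p^{ed})$, so the contribution of such primes vanishes in the limit. Replacing $R$ by $R/P$ for a $d$-dimensional minimal prime, I may assume that $R$ is a complete local domain and $M = R$.

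Next, by Cohen's structure theorem I would pick a regular local subring $A \subset R$ of the same dimension $d$ over which $R$ is module-finite; since $R$ is a domain, it is torsion-free over $A$ of some generic rank $r$. This yields short exact sequences of $A$-modules
\[
0 \to A^{\oplus r} \to R \to N \to 0, \qquad 0 \to R \to A^{\oplus r} \to N' \to 0,
\]
with $\dim_A\Supp(N),\ \dim_A\Supp(N') < d$. Working with lengths over $A$ and applying these sequences after tensoring with $A/(I\cap A)^{[p^e]}$, I can sandwich $l_A(R/I^{[p^e]}R)$ between $r\cdot l_A(A/(I\cap A)^{[p^e]})$ plus error terms controlled by Hilbert--Kunz-type lengths of $N$ and $N'$ modulo $(I\cap A)^{[p^e]}$.

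On the regular ring $A$, Kunz's flatness of Frobenius gives clean multiplicative control on $l_A(A/(I\cap A)^{[p^e]})/p^{ed}$; for the auxiliary modules $N$ and $N'$, whose supports have dimension strictly less than $d$, the inductive hypothesis yields $l_A(N/(I\cap A)^{[p^e]}N) = O(p^{e(d-1)})$ and similarly for $N'$. Combining these yields an estimate of the shape $|a_{e+1} - p^d a_e| \leq C\, p^{e(d-1)}$; dividing by $p^{(e+1)d}$, the consecutive differences are dominated by a geometric series of ratio $1/p$, so $\{a_e/p^{ed}\}$ is Cauchy and hence convergent.

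The main obstacle will be setting the induction hypothesis up broadly enough to catch the auxiliary modules $N$ and $N'$ coming out of the sandwich; the cleanest form is the strengthened statement that for every finitely generated module $M$ one has $l(M/I^{[p^e]}M) = O(p^{e\dim\Supp M})$, with the leading coefficient existing as a limit. One must also manage the residue-field correction when $A \hookrightarrow R$ is not residually trivial, by inserting the factor $[k_R:k_A]$ that relates $l_R$ to $l_A$, and verify the base case $d=0$, where $I^{[p^e]}M$ stabilizes and both sides of the estimate are eventually constant.
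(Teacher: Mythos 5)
The paper offers no proof of this statement; it is Monsky's theorem, cited to \cite{MonskyHKFunction} and used as a black box, so I can only assess your argument on its own terms. Your framing is the right one: prime filtration, discarding components of lower dimension, passing to a complete local domain, strengthening the induction hypothesis to $l(M/I^{[q]}M) = c(M)\,q^{\dim M} + O(q^{\dim M -1})$ for all finitely generated $M$, and a trivial $d=0$ base case. The residue-field bookkeeping you flag can in fact be removed outright by choosing $A$ to contain a coefficient field of $R$, so that $k_A = k_R$.

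The gap is in the middle. You form the $A$-module exact sequences $0 \to A^r \to R \to N \to 0$ and $0 \to R \to A^r \to N' \to 0$ and then "tensor with $A/(I\cap A)^{[p^e]}$." What this controls is $l_A\bigl(R/(I\cap A)^{[q]}R\bigr)$, \emph{not} $l_R(R/I^{[q]})$: the ideal $(I\cap A)^{[q]}R$ is in general strictly smaller than $I^{[q]}$, and there is no uniform comparison between those two colengths, so your phrase "sandwich $l_A(R/I^{[p^e]}R)$" silently conflates them. As written the argument only produces the Hilbert--Kunz limit for ideals extended from $A$, not for an arbitrary $\mathfrak m$-primary ideal of $R$. (There is also a structural oddity: if the sandwich did go through, it would give the limit directly as $r\cdot l_A(A/(I\cap A))$ by Kunz flatness, and the Cauchy estimate $\lvert a_{e+1} - p^d a_e\rvert \le C p^{e(d-1)}$ you invoke afterward would be superfluous --- the two halves of your plan don't quite cooperate.)

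Monsky's actual mechanism avoids the problem by comparing $R$-modules rather than $A$-modules, so that $I^{[q]}$ itself is the ideal in play throughout. After reducing to a complete local domain with perfect residue field --- Cohen's theorem is still used, but only to see that $R$ is $F$-finite and that $R^{1/p}$ has generic rank $p^d$ over $R$ --- one compares the \emph{fixed} $R$-modules $R^{1/p}$ and $R^{p^d}$ via short exact sequences with kernels/cokernels of dimension $<d$, tensors with $R/I^{[q]}$, and uses $l_R(R^{1/p}/I^{[q]}R^{1/p}) = l_R(R/I^{[pq]})$ to obtain $\lvert l(R/I^{[pq]}) - p^d\,l(R/I^{[q]})\rvert = O(q^{d-1})$ with $I$ untouched. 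If you swap your pair of $A$-linear sequences for this pair of $R$-linear sequences between $R^{1/p}$ and $R^{p^d}$, the rest of your outline goes through.
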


We need the following result which is special case of Theorem~8.17 of
\cite{Hochster-Huneke:1990}:  

\begin{thm}\label{hilbert-kunz} Let $(R,\mathfrak{m}, k)$ be a reduced,
F-finite local ring of   
characteristic $p>0$. Let $I\subseteq J$ be two $\mathfrak{m}$-primary ideals. Then
$I^* = J^*$ if and only if $e_{HK}(I, R) = e_{HK}(J, R)$. (Here $I^*$ denotes
the
tight closure of $I$.)
\end{thm}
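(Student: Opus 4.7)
The statement is a classical result of Hochster--Huneke, and the plan is to split it into the two implications and attack each via the theory of test elements for F-finite reduced rings.

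For the direction $I^*=J^*\Rightarrow e_{HK}(I,R)=e_{HK}(J,R)$, I first establish the intermediate claim that $e_{HK}(K,R)=e_{HK}(K^*,R)$ for every $\mathfrak{m}$-primary ideal $K$. Since $R$ is F-finite and reduced, a completely stable test element $c$ exists; this $c$ is a non-zerodivisor, so $\dim R/cR\le d-1$. For any $x\in K^*$ one has $cx^q\in K^{[q]}$ for all $q$, hence $c\in (K^{[q]}:x^q)$. The standard identification $(K+Rx)^{[q]}/K^{[q]}\cong R/(K^{[q]}:x^q)$ then gives
\[
l\bigl((K+Rx)^{[q]}/K^{[q]}\bigr)\;\le\;l\bigl(R/(K^{[q]}+cR)\bigr)=O(q^{d-1}),
\]
so dividing by $q^d$ and letting $q\to\infty$ yields $e_{HK}(K)=e_{HK}(K+Rx)$. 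Iterating over a finite generating set of $K^*$ modulo $K$ yields $e_{HK}(K)=e_{HK}(K^*)$. Applied to both $I$ and $J$, the hypothesis $I^*=J^*$ then produces $e_{HK}(I)=e_{HK}(I^*)=e_{HK}(J^*)=e_{HK}(J)$.

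For the converse, since $I^*\subseteq J^*$ is automatic, I need to show $J\subseteq I^*$. Pick $x\in J$; from $I\subseteq I+Rx\subseteq J$ together with the monotonicity of HK multiplicity, the hypothesis forces $e_{HK}(I)=e_{HK}(I+Rx)$, which via the same isomorphism translates into
\[
\lim_{q\to\infty}\frac{1}{q^d}\,l\bigl(R/(I^{[q]}:x^q)\bigr)=0.
\]
What I then want is a single element $c$, avoiding all minimal primes, such that $c\in(I^{[q]}:x^q)$ for all $q\gg 0$; this would give exactly $x\in I^*$. To extract it I would reduce modulo each minimal prime, using that tight closure and HK multiplicity behave well under such quotients; this reduces the problem to the case where $R$ is an F-finite domain. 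There I would exploit the Frobenius compatibility $(I^{[q]}:x^q)^{[q']}\subseteq (I^{[qq']}:x^{qq'})$, coupled with the existence of a completely stable test element, to pass from the asymptotic smallness of the colengths to a uniform common nonzero element.

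The main obstacle is precisely this last extraction step. Translating $l(R/(I^{[q]}:x^q))=o(q^d)$ into the existence of a single uniform element certifying $x\in I^*$ is the technical heart of Hochster--Huneke Theorem~8.17; it ultimately rests on their machinery of test elements, \emph{tight closure capturing colons}, and the behavior of tight closure under the Frobenius functor in F-finite reduced rings, which is precisely the source being cited here.
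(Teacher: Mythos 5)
The paper does not actually prove this statement: it is quoted as a special case of \cite[Theorem 8.17]{Hochster-Huneke:1990}, with the observation that the required test element hypothesis is satisfied because $R$ is reduced, local, and $F$-finite (hence excellent), citing \cite[Theorem 6.1]{Hochster-Huneke:1994}. Your proof of the forward implication is complete and correct: a test element $c$ is a nonzerodivisor in the reduced ring $R$, so $\dim R/cR\le d-1$, and since $c\in(K^{[q]}:x^q)$ for all $q$ one gets
\[
l\bigl(R/(K^{[q]}:x^q)\bigr)\le l\bigl(R/(K^{[q]}+cR)\bigr)=O(q^{d-1}),
\]
giving $e_{HK}(K)=e_{HK}(K+Rx)$ for $x\in K^*$; iterating over generators of $K^*$ (legitimate since $K^*\subseteq(K+Rx_1+\cdots+Rx_i)^*$ at each stage) yields $e_{HK}(K)=e_{HK}(K^*)$, and applying this to $I$ and $J$ finishes. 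For the converse you correctly reduce, via monotonicity of Hilbert--Kunz multiplicity, to showing that $e_{HK}(I)=e_{HK}(I+Rx)$ forces $x\in I^*$, and you correctly identify the crux: one must convert the vanishing of $\lim_q q^{-d}\,l\bigl(R/(I^{[q]}:x^q)\bigr)$ into the existence of a single element $c$, outside all minimal primes, with $cx^q\in I^{[q]}$ for all large $q$. Your plan (pass to domains, use Frobenius compatibility of colon ideals, invoke a completely stable test element) is a fair outline of the Hochster--Huneke mechanism, but the actual extraction of the uniform multiplier is not carried out, and you say so. That is honest and exactly parallels the paper, which cites the same source rather than reproving the hard direction; there is no gap in your proposal beyond the one you flag yourself, and your treatment of the easy direction is more detailed than anything the paper provides.
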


We note that the assumption concerning test elements in \cite[Theorem
8.17]{Hochster-Huneke:1990} 
is satisfied in this case since the ring is reduced, local and $F$-finiteness implies excellentness .
See \cite[Theorem 6.1]{Hochster-Huneke:1994}.\\
We also need the following proposition in \cite{HunekeTightClosureBook}.
\begin{prop}\label{HK}
Let ($R$, $\mathfrak{m}$, k) be a local Noetherian ring of characteristic $p>0$ and dimension $d$.
Let $M$, $N$ be finitely generated $R$-modules, and let $I$ be an $\mathfrak{m}$-primary ideal.
\begin{enumerate}
\item If dim$M<d$, then $l(M/I^{[q]}M)=O(q^{d-1})$ and thus $e_{HK}(I, M) =0$.
\item Let W be the complement of the set of minimal primes $Q$ such that dim$(R/Q)=d$. If $M_W \cong N_W$,\begin{eqnarray*}
\left|(M/I^{[q]}M)-l(N/I^{[q]}N)\right|=O(q^{d-1})
\end{eqnarray*}
In paticular, $e_{HK}(I, M) =e_{HK}(I, N)$.
\end{enumerate}
\end{prop}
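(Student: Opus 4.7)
For part (1), I would argue by prime filtration. Choose a chain of submodules $0=M_0\subsetneq M_1\subsetneq\cdots\subsetneq M_n=M$ with $M_i/M_{i-1}\cong R/P_i$. Each $P_i$ contains $\operatorname{ann}_R M$, so $\dim R/P_i\le\dim M<d$, and Monsky's theorem applied to the local ring $R/P_i$ gives $l\bigl((R/P_i)/I^{[q]}(R/P_i)\bigr)=O(q^{\dim R/P_i})=O(q^{d-1})$. From each short exact sequence $0\to M_{i-1}\to M_i\to R/P_i\to 0$, right-exactness of $-\otimes R/I^{[q]}$ yields $l(M_i/I^{[q]}M_i)\le l(M_{i-1}/I^{[q]}M_{i-1})+l\bigl((R/P_i)/I^{[q]}(R/P_i)\bigr)$, and inducting on $i$ gives $l(M/I^{[q]}M)=O(q^{d-1})$; in particular $e_{HK}(I,M)=0$.

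For part (2), the plan is to produce an $R$-linear map between $M$ and $N$ whose kernel and cokernel have dimension $<d$, and then reduce to part (1). Let $Q_1,\dots,Q_r$ be the minimal primes with $\dim R/Q_i=d$ and set $W=R\setminus\bigcup_i Q_i$, a multiplicatively closed subset. Given the isomorphism $\phi\colon W^{-1}M\to W^{-1}N$, after clearing denominators against a finite generating set of $M$ one obtains an $R$-linear map $\psi\colon M\to N$ whose $W$-localization equals $s\phi$ for some $s\in W$. Both $W^{-1}\ker\psi$ and $W^{-1}\operatorname{coker}\psi$ vanish, so the supports of $\ker\psi$ and $\operatorname{coker}\psi$ avoid every $Q_i$, whence $\dim\ker\psi,\dim\operatorname{coker}\psi<d$.

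Split $\psi$ into the two short exact sequences $0\to\ker\psi\to M\to\operatorname{im}\psi\to 0$ and $0\to\operatorname{im}\psi\to N\to\operatorname{coker}\psi\to 0$; by part (1), $l(\ker\psi/I^{[q]}\ker\psi)$ and $l(\operatorname{coker}\psi/I^{[q]}\operatorname{coker}\psi)$ are both $O(q^{d-1})$. The first sequence, combined with the standard estimate $|l(B/I^{[q]}B)-l(C/I^{[q]}C)|\le l(A/I^{[q]}A)$ for $0\to A\to B\to C\to 0$ (from the Tor long exact sequence and subadditivity of length), gives $|l(M/I^{[q]}M)-l(\operatorname{im}\psi/I^{[q]}\operatorname{im}\psi)|=O(q^{d-1})$ directly. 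For the second sequence, subadditivity yields $l(N/I^{[q]}N)-l(\operatorname{im}\psi/I^{[q]}\operatorname{im}\psi)\le O(q^{d-1})$, but the reverse direction is the main technical obstacle: the kernel of the natural map $\operatorname{im}\psi/I^{[q]}\operatorname{im}\psi\to N/I^{[q]}N$ is $(\operatorname{im}\psi\cap I^{[q]}N)/I^{[q]}\operatorname{im}\psi$, and bounding its length by $O(q^{d-1})$ amounts to the estimate $l\bigl(\operatorname{Tor}_1^R(R/I^{[q]},\operatorname{coker}\psi)\bigr)=O(q^{d-1})$. This Tor bound can be obtained from a finite free presentation of $\operatorname{coker}\psi$ together with Artin--Rees, or by induction on $\dim\operatorname{coker}\psi$ via prime filtration. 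Combining all these estimates yields $|l(M/I^{[q]}M)-l(N/I^{[q]}N)|=O(q^{d-1})$, and dividing by $q^d$ gives $e_{HK}(I,M)=e_{HK}(I,N)$.
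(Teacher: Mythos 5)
The paper itself gives no proof of this proposition; it is cited directly from Huneke's CBMS notes \cite{HunekeTightClosureBook}, so there is no in-paper argument to compare against. Your part (1) is correct as written: the prime filtration reduces to cyclic modules $R/P_i$ with $\dim R/P_i < d$, and each contributes $O(q^{d-1})$ by the existence of Hilbert--Kunz multiplicity for $R/P_i$.

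In part (2) you correctly set up the map $\psi\colon M\to N$ with $\dim\ker\psi$, $\dim\operatorname{coker}\psi < d$, and correctly observe that the first short exact sequence $0\to\ker\psi\to M\to\operatorname{im}\psi\to 0$ is handled by the estimate $|l(B/I^{[q]}B)-l(C/I^{[q]}C)|\le l(A/I^{[q]}A)$. But for the second sequence $0\to\operatorname{im}\psi\to N\to\operatorname{coker}\psi\to 0$ the module of small dimension sits in the $A$-slot, so that estimate says nothing, and your argument then leans on the unproved bound $l(\operatorname{Tor}_1^R(R/I^{[q]},\operatorname{coker}\psi))=O(q^{d-1})$. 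This is where the proposal has a real gap: neither the Artin--Rees nor the prime-filtration route you gesture at closes it without further work (reducing to $\operatorname{Tor}_1(R/I^{[q]},R/P)=(P\cap I^{[q]})/I^{[q]}P$ still leaves an $O(q^{d-1})$ bound to establish, essentially equivalent to an additivity statement that is itself part of what one is proving). The standard way to avoid this entirely is to exploit the symmetry of the hypothesis: given $\psi\colon M\to N$ with $W^{-1}\psi$ an isomorphism, right-exactness of $-\otimes R/I^{[q]}$ applied to $M\to N\to\operatorname{coker}\psi\to 0$ gives directly $l(N/I^{[q]}N)\le l(M/I^{[q]}M)+l(\operatorname{coker}\psi/I^{[q]}\operatorname{coker}\psi)=l(M/I^{[q]}M)+O(q^{d-1})$, and since $N_W\cong M_W$ as well, repeating the argument with a map $N\to M$ gives the reverse inequality. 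No $\operatorname{Tor}_1$ estimate is needed, and no splitting of $\psi$ through its image is required. I would recommend replacing the second half of your part (2) with this one-sided-plus-symmetry argument.
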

For the proofs, we need to consider about the minimal difference of Hilbert-Kunz multiplicity along some ideals. Though the minimal difference of Hilbert-Kunz multiplicity along $\mathfrak{m}$-primary ideals (called minimal relative Hilbert-Kunz multiplicity) is introduced by K.-i. Watanabe and K. Yoshida \cite{WatanabeYoshidaMinimalHKmultiplicity}, We need the "parameter ideal" version of minimal relative Hilbert-Kunz multiplicity. The following definition and theorem are due to M. Hochster and Y. Yao \cite{F-rat}.
\begin{defi}
Let $(R, \mathfrak{m}, k)$ be a local ring of prime characteristic $p>0$ and $M$ be a finitely
generated R-module. Define (here s.o.p. stands for system of parameters) 
 \[
r_{R(M)}=\inf \{e_{HK}((\underline{x}),M)-e_{HK}((\underline{x},\Delta),M)|\  \underline{x}\  is\  a\  s.o.p. \ and((\underline{x}):\Delta)=\mathfrak{m}\}
 \]
\end{defi}
\begin{thm}\label{rsig}
 Let $(R, \mathfrak{m}, k)$ be a Noetherian local ring of characteristic p. Suppose $R$ is excellent or there exists a common parameter (weak) test element for $R$ and $\hat{R}$. Then $R$ is F -rational if and only if $r_{R}(R) > 0$.
\end{thm}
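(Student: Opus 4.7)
The plan is to use Theorem~\ref{hilbert-kunz} as the main bridge: for $\mathfrak{m}$-primary ideals $I\subseteq J$ one has $I^*=J^*$ if and only if $e_{HK}(I,R)=e_{HK}(J,R)$. Since, for any s.o.p. $\underline{x}$ and any $\Delta$ with $((\underline{x}):\Delta)=\mathfrak{m}$, a standard identification gives
\[
e_{HK}((\underline{x}),R)-e_{HK}((\underline{x},\Delta),R)=\lim_{q\to\infty}\frac{\ell\bigl(R/((\underline{x})^{[q]}\colon\Delta^{q})\bigr)}{q^{d}},
\]
the invariant $r_{R}(R)$ measures exactly how far parameter ideals fail to be tightly closed, in a uniform sense.

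For the direction $r_{R}(R)>0 \Rightarrow R$ is F-rational, I argue by contrapositive. If $R$ is not F-rational, pick a s.o.p.\ $\underline{x}$ and $z\in (\underline{x})^{*}\setminus (\underline{x})$. Multiplying $z$ by a suitable element of $\mathfrak{m}$, I may push it into the socle of $R/(\underline{x})$, producing $\Delta\in (\underline{x})^{*}$ with $((\underline{x})\colon\Delta)=\mathfrak{m}$. Then $(\underline{x},\Delta)\subseteq (\underline{x})^{*}$, so $(\underline{x},\Delta)^{*}\subseteq (\underline{x})^{**}=(\underline{x})^{*}$ and the two tight closures coincide. Theorem~\ref{hilbert-kunz} (applicable here because the test-element hypotheses in the statement of the theorem carry over via \cite[Theorem~6.1]{Hochster-Huneke:1994} to the excellent or common-test-element setting) then gives $e_{HK}((\underline{x}),R)=e_{HK}((\underline{x},\Delta),R)$, contradicting $r_{R}(R)>0$.

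For the direction $R$ F-rational $\Rightarrow r_{R}(R)>0$, the first easy step is to note that F-rationality (under the stated hypotheses) forces $R$ to be Cohen--Macaulay with every parameter ideal tightly closed, so that for any admissible $(\underline{x},\Delta)$ one has $\Delta\notin (\underline{x})=(\underline{x})^{*}$; hence $(\underline{x})^{*}\subsetneq (\underline{x},\Delta)^{*}$ and Theorem~\ref{hilbert-kunz} yields $e_{HK}((\underline{x}),R)-e_{HK}((\underline{x},\Delta),R)>0$ for every individual pair. The real task, and the main obstacle, is to upgrade this pointwise positivity to a uniform positive lower bound across all systems of parameters $\underline{x}$ and all socle representatives $\Delta$.

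To get uniformity I would fix a parameter (weak) test element $c\neq 0$ supplied by the hypothesis. Since $\mathfrak{m}\Delta\subseteq (\underline{x})$ gives $\mathfrak{m}^{[q]}\Delta^{q}\subseteq(\underline{x})^{[q]}$, the colon $((\underline{x})^{[q]}\colon\Delta^{q})$ contains $\mathfrak{m}^{[q]}$; the point is to bound it away from $R$ uniformly. Using $\Delta\notin(\underline{x})^{*}$ together with the uniform behavior of the test element $c$ on parameter ideals (an Artin--Rees / colon-capturing type estimate, as in \cite{Hochster-Huneke:1990, Hochster-Huneke:1994}), I expect an inclusion of the shape $((\underline{x})^{[q]}\colon\Delta^{q})\subseteq(\mathfrak{m}^{\lceil q/k\rceil})$ for a constant $k$ depending only on $c$ and on invariants of $R$, not on the choice of $(\underline{x},\Delta)$. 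Combined with the existence of the Hilbert--Samuel-type limit, this produces a uniform constant $\alpha>0$ with
\[
\lim_{q\to\infty}\frac{\ell\bigl(R/((\underline{x})^{[q]}\colon\Delta^{q})\bigr)}{q^{d}}\geq \alpha,
\]
whence $r_{R}(R)\geq\alpha>0$. The hardest and most technical step is precisely this uniform colon-length estimate, which is where the Hochster--Yao argument of \cite{F-rat} does the main work; the tight closure gymnastics on either side of it are formal consequences of Theorem~\ref{hilbert-kunz}.
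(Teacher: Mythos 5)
The paper itself does not prove this theorem: it is attributed to Hochster and Yao \cite{F-rat} and stated as a black box, so there is no internal argument in the paper to compare against. Evaluating your sketch on its own terms: the contrapositive direction (not F\mbox{-}rational $\Rightarrow r_R(R)=0$) is correct in outline, though you should note that Theorem~\ref{hilbert-kunz} as stated here requires a reduced $F$-finite ring while Theorem~\ref{rsig} only assumes excellence or a common parameter test element; you flag the need to pass to the more general Hochster--Huneke statement, which is the right move (and F\mbox{-}rationality is not assumed in this direction, so reducedness must come from elsewhere or from the more general version of the result).

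For the direction F\mbox{-}rational $\Rightarrow r_R(R)>0$ you correctly isolate the crux, namely upgrading the pointwise strict inequality $e_{HK}((\underline{x}),R)>e_{HK}((\underline{x},\Delta),R)$ to a uniform positive lower bound over all systems of parameters $\underline{x}$ and all socle representatives $\Delta$. However, the proposed uniform inclusion $((\underline{x})^{[q]}:\Delta^q)\subseteq\mathfrak{m}^{\lceil q/k\rceil}$ for a single $k$ independent of $(\underline{x},\Delta)$ is merely conjectured; no Artin--Rees or test-element argument is actually given, and it is not evident that the estimate holds in that particular form. In short, the genuine mathematical content of the theorem --- the uniformity --- is left unproved in your plan, exactly as you acknowledge. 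That is acceptable as a commentary mirroring the paper's own deferral to \cite{F-rat}, but as a self-contained proof it has a real gap precisely at the step you label ``hardest.''
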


We review the $F$-signature of modules defined by Y.Yao\cite{YaoObservationsAboutTheFSignature} and the result by K. Tucker about it \cite{TuckerFSignatureExists}. 

\begin{defi}
Let  $(R, \mathfrak{m}, k)$ be an $F$-finite local ring and $M$ a finitely generated R-module. For each $e \in \mathbb{N}$, put $q=p^e, \alpha = log_p[k:k^p]$, and write $F_*^eM \cong R^{a_q} \oplus M_e$ as left $R$-modules such that $M_e$ has no non-zero free direct summand. In other words, the number $a_e$ is the maximal rank of free direct summand of the left R-module $F_*^eM$.  We define
\[
 s'(M) =\lim_{e \to \infty}\frac{a_q}{q^{dimR+\alpha}}
 \]
\end{defi}

\begin{thm}
\label{tucker}
   Let $(R, \mathfrak{m}, k)$ be a
 $d$-dimensional $F$-finite characteristic $p > 0$ local
  domain and let $M$ be a finitely generated $R$-module. Denote by $a_{e}$ the maximal rank of a free
$R$-module appearing in a direct sum decomposition of
  $F^{e}_{*}M$.  Then  
\[
\lim_{q \to \infty} \frac{a_{q}}{q^{(d+\alpha)}} =s'(M)= rank(M) \cdot
s(R) \, \, .
\]
\end{thm}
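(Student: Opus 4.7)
The plan is to reduce the statement to the known existence of $s(R)$ (Tucker's theorem for $R$ itself, cited in the introduction) via a two-step sandwich argument: first kill the torsion in $M$, then sandwich the resulting torsion-free module between two copies of $R^{\,r}$, where $r = \mathrm{rank}(M)$, in each step controlling the error from the low-dimensional defect terms.

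The key input is a lemma of the following flavour: for a short exact sequence $0 \to A \to B \to C \to 0$ of finitely generated $R$-modules, the maximal free-summand ranks satisfy an estimate of the form
\[
\bigl| a(B) - a(A) - a(C) \bigr| \;=\; O\bigl(\ell(A/\underline{x}A) + \ell(C/\underline{x}C)\bigr),
\]
where $\underline{x}$ is a fixed system of parameters. Applied to $F^e_*$ of such a sequence whose outer terms have dimension $<d$, the right-hand side becomes $O(q^{d-1+\alpha})$: Proposition~\ref{HK}(1) gives $\ell(N/\underline{x}^{[q]}N) = O(q^{d-1})$ whenever $\dim N < d$, and passing through $F^e_*$ introduces at most a factor of $q^{\alpha}$.

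Granted the lemma, I would argue as follows. Let $T \subseteq M$ be the torsion submodule, so $\dim T < d$; applying the lemma to $0 \to F^e_*T \to F^e_*M \to F^e_*(M/T) \to 0$ gives $a_e(F^e_*M) = a_e(F^e_*(M/T)) + O(q^{d-1+\alpha})$, so I may assume $M$ is torsion-free of rank $r$. Since $R$ is a domain, there exist inclusions $R^{\,r} \hookrightarrow M \hookrightarrow R^{\,r}$ whose cokernels have dimension $<d$: the first by choosing $r$ elements of $M$ that form a basis of $M \otimes_R \mathrm{Frac}(R)$, and the second by multiplying a finite generating set of $M$ through by a common nonzero element. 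Applying $F^e_*$ and the lemma to each sequence yields
\[
a_e(F^e_*M) \;=\; a_e(F^e_*R^{\,r}) + O(q^{d-1+\alpha}) \;=\; r \cdot a_e(F^e_*R) + O(q^{d-1+\alpha}).
\]
Dividing by $q^{d+\alpha}$ and invoking the existence of $s(R)$ for the domain $R$ gives both the existence of $s'(M)$ and the equality $s'(M) = r \cdot s(R)$.

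The principal obstacle is the key lemma on near-additivity of free-summand ranks in short exact sequences. A standard route is to sandwich $a(N)$ between Hilbert--Kunz-type lengths along a well-chosen $\mathfrak{m}$-primary ideal, and then to use the additivity of Hilbert--Kunz multiplicity in short exact sequences together with Proposition~\ref{HK} to absorb the contribution of the low-dimensional end-terms into an error of order $q^{d-1+\alpha}$. The technical heart of Tucker's original proof is precisely this type of estimate; once it is in hand, the sandwich argument above is essentially automatic.
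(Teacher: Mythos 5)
The paper offers no proof of this statement: it is quoted as a known result of Tucker \cite{TuckerFSignatureExists} (with the $F$-signature of modules due to Yao \cite{YaoObservationsAboutTheFSignature}). So your proposal has to stand on its own, and the issue is with the key lemma you invoke.

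The overall strategy --- kill torsion, sandwich a torsion-free $M$ of rank $r$ between two copies of $R^{r}$, and reduce to the existence of $s(R)$ --- is indeed the right idea and is close in spirit to Tucker's own argument. But the lemma as you have written it, namely $|a(B) - a(A) - a(C)| = O\bigl(\ell(A/\underline{x}A) + \ell(C/\underline{x}C)\bigr)$ for $0 \to A \to B \to C \to 0$, puts the colengths of \emph{both} outer terms on the right. In every sequence you apply it to, one outer term has full dimension $d$: in $0 \to T \to M \to M/T \to 0$ the quotient $M/T$ has dimension $d$, and in $0 \to R^{r} \to M \to C_1 \to 0$ and $0 \to M \to R^{r} \to C_2 \to 0$ the term $R^{r}$ does. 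After applying $F^{e}_{*}$, the colength of a full-dimensional module along $\underline{x}^{[q]}$ grows like $q^{d+\alpha}$ (it is comparable to $e_{HK}(\underline{x},-)\,q^{d+\alpha}$), not $O(q^{d-1+\alpha})$ as you assert. So the error term the lemma supplies is of the same order as the quantity you are trying to estimate, and the sandwich gives no information.

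What the argument actually requires is a \emph{one-sided} estimate: for $0 \to A \to B \to C \to 0$ with $\dim C < d$, a bound on $|a_e(F^{e}_{*}A) - a_e(F^{e}_{*}B)|$ of order $q^{d-1+\alpha}$ in which the error is controlled by the small term $C$ alone, together with the companion statement when the small module sits on the submodule side. That inequality is the genuine technical heart of Tucker's theorem --- it is where one brings in the length description of the free rank (the ideals $I_e$ with $a_e = \ell_R(R/I_e)$ and their module analogues) and the Hilbert--Kunz vanishing in Proposition \ref{HK} --- and you have asserted rather than established it. As literally written your lemma has a scope error; once corrected, it is exactly the hard step, so what you have is a reasonable outline with the essential estimate missing.
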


\section{The dual $F$-signature}
We define the dual $F$-signature of modules.
\begin{defi}
Let $(R, \mathfrak{m}, k)$ be a reduced $F$-finite local ring  of characteristic $p>0$ and $M$ be an $R$-module. For each natural number $e$, Put $q=p^e, \alpha = log_p[k:k^p]$, and $ b_{q} = \max \{  \ n  \ |  \  ^\exists  F_*^eM \twoheadrightarrow  M^n  \ \}$
and define 
\[
 s(M) =\lim \sup_{e \to \infty}\frac{b_q}{q^{dimR+\alpha}}
 \]
We call $b_q$ $q$-th $F$-surjective number of $M$ and call $s(M)$ the dual $F$-signature of $M$.\\
\end{defi}

\begin{rem}
Since any homomorphism $\phi : F_*^eR \twoheadrightarrow  R^n$ splits, We can easily see that $s(R)$ coincides with the $F$-signature defined by  G. J. Leuschke and C. Huneke.
\end{rem}
Theorem1.2 (2) follows from the following result due to I. M. Aberbach and G. J. Leuschke\cite{AberbachLeuschke}. 
\begin{thm}
Let $(R, \mathfrak{m}, k)$ be a reduced excellent $F$-finite local ring containing a field of characteristic p, let $d=$dim$R$. Then $s(R)$ is positive if and only if $R$ is strongly $F$-regular.
\end{thm}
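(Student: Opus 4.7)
The plan is to link the splitting numbers $a_e$ to a descending family of ideals in $R$ and then control their growth via Hilbert-Kunz multiplicity. For each $e$ define
\[
I_e := \{\, r \in R : \phi(F_*^e r) \in \mathfrak{m}\ \text{for every}\ \phi \in \mathrm{Hom}_R(F_*^eR, R)\,\},
\]
and observe, by choosing dual bases of the free and non-free summands of $F_*^eR$, that $a_e = \ell_R(R/I_e)$. Thus $s(R) = \lim_e \ell_R(R/I_e)/q^{d+\alpha}$, which converts the question into one about the asymptotics of a descending family of $\mathfrak{m}$-primary ideals.

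For the implication $s(R) > 0 \Rightarrow R$ strongly F-regular, I would introduce the splitting prime $\mathcal{P} := \bigcap_e I_e$. First, $s(R) > 0$ forces $a_e \geq 1$ for all large $e$, so $R$ is F-pure, and under F-purity one checks that $\mathcal{P}$ is a prime ideal. The main technical point is the upper estimate $\ell_R(R/I_e) = O(q^{\dim R/\mathcal{P} + \alpha})$: given any $\phi \in \mathrm{Hom}_R(F_*^eR, R)$, its reduction modulo $\mathfrak{m}$ factors through $F_*^e(R/\mathcal{P})$ by definition of $\mathcal{P}$, so $\ell_R(R/I_e)$ is controlled by a Hilbert-Kunz-type length on $R/\mathcal{P}$, which grows at most like $q^{\dim R/\mathcal{P}+\alpha}$. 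Combined with $s(R) > 0$, this forces $\dim R/\mathcal{P} = d$ and hence $\mathcal{P} = 0$. By Hochster-Huneke's characterization of strong F-regularity (for every $c \neq 0$ there exist $e$ and $\phi$ with $\phi(F_*^e c) = 1$), the vanishing of $\mathcal{P}$ is precisely equivalent to $R$ being strongly F-regular.

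For the converse, I would produce for each large $e$ enough genuinely independent surjections $F_*^eR \twoheadrightarrow R$ to force $a_e$ to grow like $q^{d+\alpha}$. Since strong F-regularity implies F-rationality, the Hochster-Yao invariant $r_R(R)$ from Theorem~\ref{rsig} is positive, so there exist a system of parameters $\underline{x}$ and $\Delta$ with $((\underline{x}):\Delta) = \mathfrak{m}$ and
\[
e_{HK}((\underline{x}), R) - e_{HK}((\underline{x}, \Delta), R) > 0.
\]
The plan is to translate this Hilbert-Kunz gap, via Theorem~\ref{hilbert-kunz}, into a lower bound $\ell_R(R/I_e) \geq c\, q^{d+\alpha}$ for some positive constant $c$ depending on $r_R(R)$: an element in $((\underline{x}, \Delta)^{[q]})/((\underline{x})^{[q]})^{*}$ gives rise to a new $R$-linear surjection $F_*^e R \twoheadrightarrow R$ linearly independent of the previous ones, and summing over a basis of the Hilbert-Kunz ``gap'' yields the required count.

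The principal obstacle is the upper bound in the first direction: identifying $\mathcal{P}$ as a prime ideal and controlling the growth of $\ell_R(R/I_e)$ by $\dim R/\mathcal{P}$ rather than by $d$ are both delicate, requiring a careful Frobenius-compatible analysis of the $I_e$ together with a Hilbert-Kunz comparison on $R/\mathcal{P}$ that is uniform in $e$. The converse direction is subtler than it looks because a single splitting coming from strong F-regularity does not a priori produce enough independent free summands to account for the full polynomial growth, and one has to extract the necessary quantity from the parameter-ideal Hilbert-Kunz gap recorded by $r_R(R)$.
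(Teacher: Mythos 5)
The paper itself does not prove this statement; it is cited verbatim from Aberbach--Leuschke \cite{AberbachLeuschke} (the sentence preceding the theorem in Section 3 is ``Theorem 1.2 (2) follows from the following result due to I.\ M.\ Aberbach and G.\ J.\ Leuschke''). So there is no in-paper proof for your sketch to match or diverge from; your proposal is a reconstruction.

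On the forward direction ($s(R)>0 \Rightarrow$ strongly $F$-regular): the outline you give, via $I_e$, the splitting prime $\mathcal{P}=\bigcap_e I_e$, and a growth bound $\ell(R/I_e)=O(q^{\dim R/\mathcal{P}+\alpha})$, is really the later Aberbach--Enescu / Blickle--Schwede--Tucker ``$F$-splitting ratio'' argument rather than Aberbach--Leuschke's original one. That is a legitimate alternative route, but the asserted growth bound is precisely the hard technical theorem behind the splitting ratio, not something that falls out of ``factoring modulo $\mathcal{P}$''; and even granting it, concluding $\mathcal{P}=0$ from $\dim R/\mathcal{P}=d$ only makes $\mathcal{P}$ a minimal prime --- you need $R$ to be a domain (or a separate reduction) to get $\mathcal{P}=0$ and then invoke the Hochster--Huneke criterion. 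As written this is a plan with the key step still outstanding, which you acknowledge.

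The converse direction has a genuine gap, and it is the choice of invariant. You route through $r_R(R)$, the parameter-ideal relative Hilbert--Kunz gap of Hochster--Yao. But, as this very paper shows (Proposition~\ref{zero} through Theorem~3.12), that gap controls $s(\omega_R)$, not $s(R)$; the diagram-chasing argument that converts an HK drop along $(\underline{x})\subseteq(\underline{x},\Delta)$ into a lower bound for $b_q$ produces surjections $F_*^e\omega_R\twoheadrightarrow\omega_R$, not free summands of $F_*^eR$. Since $R$ need not be Gorenstein, $s(\omega_R)>0$ does not give $s(R)>0$. The step ``an element in $((\underline{x},\Delta)^{[q]})/((\underline{x})^{[q]})^{*}$ gives rise to a new $R$-linear surjection $F_*^eR\twoheadrightarrow R$ independent of the previous ones'' is not an inference at all --- no map is constructed, and elements of that quotient witness a tight-closure distinction, not a splitting. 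What Aberbach--Leuschke actually use is the Watanabe--Yoshida minimal relative Hilbert--Kunz multiplicity over $\mathfrak{m}$-primary (not parameter) ideals, together with the inequality $a_e/q^{d+\alpha}\geq e_{HK}(I_e)-e_{HK}(I_e:\mathfrak{m})$ for the very ideals $I_e$ you defined; your proposal replaces the correct tool with one that measures the dual signature instead.
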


\begin{prop}\label{zero}
Let $(R, \mathfrak{m}, k)$ be a reduced F-finite local ring and $M$ be a finitely generated $R$-module . Assume dim$M<$dim$R$, then $s(M)=0$.
\end{prop}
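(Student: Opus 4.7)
The plan is to bound $b_q$ from above via a length computation modulo an $\mathfrak{m}$-primary ideal, and then invoke Proposition~\ref{HK}(1), which guarantees that $e_{HK}(I,M)=0$ whenever $\dim M<\dim R$.

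Fix an $\mathfrak{m}$-primary ideal $I$ (for example $I=\mathfrak{m}$), and assume $M\ne 0$ so that $\ell_R(M/IM)>0$. Given any surjection $F_*^eM\twoheadrightarrow M^n$, tensoring with $R/I$ yields a surjection
\[
F_*^eM/IF_*^eM\twoheadrightarrow (M/IM)^n.
\]
The first step is to identify the left-hand side. Since $r\cdot F_*^em=F_*^e(r^qm)$, one has $I\cdot F_*^eM=F_*^e(I^{[q]}M)$, so $F_*^eM/IF_*^eM\cong F_*^e(M/I^{[q]}M)$ as $R$-modules.

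The second step is a length calculation. A short induction on length (reducing to the case $N=R/\mathfrak{m}$, where $F_*^eN$ is $k$ viewed as a $k$-module through Frobenius and therefore has length $[k:k^q]=q^\alpha$) shows that $\ell_R(F_*^eN)=q^\alpha\,\ell_R(N)$ for every finite length $R$-module $N$. Applying this with $N=M/I^{[q]}M$ and comparing lengths across the surjection gives
\[
n\cdot\ell_R(M/IM)\;\le\;q^\alpha\,\ell_R(M/I^{[q]}M).
\]
Taking $n=b_q$ and dividing by $q^{d+\alpha}$ (where $d=\dim R$) yields
\[
\frac{b_q}{q^{d+\alpha}}\;\le\;\frac{\ell_R(M/I^{[q]}M)}{q^d\,\ell_R(M/IM)}.
\]

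Finally, letting $q\to\infty$, the right-hand side converges to $e_{HK}(I,M)/\ell_R(M/IM)$. By Proposition~\ref{HK}(1), the hypothesis $\dim M<d$ forces $e_{HK}(I,M)=0$, so the right-hand side tends to $0$. This forces $\limsup_{e\to\infty}b_q/q^{d+\alpha}=0$, i.e.\ $s(M)=0$.

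There is no real obstacle; the only point requiring care is the identification $F_*^eM/IF_*^eM\cong F_*^e(M/I^{[q]}M)$ and the extraction of the factor $q^\alpha$ from the length on passing through Frobenius, after which the vanishing of Hilbert--Kunz multiplicity in the lower-dimensional case finishes the argument.
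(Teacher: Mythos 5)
Your proof is correct and follows essentially the same route as the paper: tensor a surjection $F_*^eM\twoheadrightarrow M^{b_q}$ with $R/I$, compare lengths to bound $b_q$, divide by $q^{d+\alpha}$, and apply Proposition~\ref{HK}(1) to kill the Hilbert--Kunz multiplicity. You merely make explicit two points the paper leaves tacit, namely the identification $F_*^eM/IF_*^eM\cong F_*^e(M/I^{[q]}M)$ and the factor $q^\alpha$ relating $\ell_R(F_*^eN)$ to $\ell_R(N)$.
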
 
\begin{proof}
Assume we have 
\[
F_*^eM\longrightarrow  M^{b_q} \longrightarrow 0
\]
Tensoring $R/I$, we have
\[
F_*^e(M/I^{[q]}M)\longrightarrow  (M/IM)^{b_q} \longrightarrow 0
\]
Therefore we have
\[
l(F_*^e(M/I^{[q]}M)\geq b_q\cdot l(M/IM)
\]
Dividing $q^{d+\alpha}$ and taking the limit, we have
\[
e_{HK}(I, M) \geq s(M)\cdot l(M/IM)
\]
The left hand side is zero by Theorem \ref{HK} (1).

\end{proof}

\begin{prop}\label{value}
Let $(R, \mathfrak{m}, k)$ be a reduced F-finite local ring and $M$ be a finitely generated $R$-module . Then $0\le s(M)\le1$.
\end{prop}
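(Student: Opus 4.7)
The lower bound $s(M) \ge 0$ is immediate: $n = 0$ is allowed in the definition of $b_q$, so $b_q \ge 0$ for every $e$. For the upper bound I would argue by cases on $\dim M$. If $\dim M < d := \dim R$, then Proposition \ref{zero} already gives $s(M) = 0$. Otherwise $\dim M = d$, so $\Supp(M)$ contains some prime $P$ with $\dim R/P = d$; any such $P$ must be a minimal prime of $R$ (a strictly smaller prime would give a strictly larger $\dim R/Q$), and since $R$ is reduced, $R_P$ is a field, which I identify with the fraction field $K$ of the $d$-dimensional $F$-finite domain $R/P$.

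The idea is to localize a surjection $F_*^e M \twoheadrightarrow M^{b_q}$ at $P$ to obtain a surjection of finite-dimensional $K$-vector spaces
\[
F_*^e(M_P) \twoheadrightarrow (M_P)^{b_q},
\]
and then compare dimensions. Set $r := \dim_K M_P$, which is positive by the choice of $P$. The right-hand side has $K$-dimension $r \cdot b_q$. For the left-hand side, $F_*^e(M_P)$ is $M_P$ equipped with the $K$-action $\lambda \cdot v = \lambda^q v$, which is the same datum as $M_P$ viewed as a $K^q$-module and then transported back to a $K$-module via the isomorphism $F^e : K \xrightarrow{\sim} K^q$. Combining a $K$-basis of $M_P$ of size $r$ with a $K^q$-basis of $K$ of size $[K:K^q]$ produces a $K^q$-basis of $M_P$, so $\dim_K F_*^e(M_P) = r \cdot [K:K^q]$. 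The standard rank formula for $F$-finite domains (which is exactly what motivates the normalization $q^{d+\alpha}$ in the definition of $s$) gives $[K:K^q] = q^{d+\alpha}$, so surjectivity forces $r \cdot b_q \le r \cdot q^{d+\alpha}$, i.e.\ $b_q \le q^{d+\alpha}$, and passing to $\limsup$ yields $s(M) \le 1$.

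The one step that needs genuine verification is the dimension identity $\dim_K F_*^e(M_P) = r \cdot q^{d+\alpha}$, or equivalently the combination of the base-change count $\dim_K F_*^e V = \dim_K V \cdot [K:K^q]$ for a finite-dimensional $K$-vector space $V$ together with the Kunz-type formula $[K:K^q] = q^{d+\alpha}$ for the fraction field of a $d$-dimensional $F$-finite domain with residue field $k$. Everything else in the argument (reduction via Proposition \ref{zero}, localization at $P$, counting dimensions in a vector space surjection) is formal.
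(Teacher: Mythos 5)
Your argument is correct and follows the paper's proof essentially verbatim: reduce to $\dim M = \dim R$ via Proposition~\ref{zero}, localize the surjection $F_*^e M \twoheadrightarrow M^{b_q}$ at a minimal prime $P$ in the support, and compare ranks using $\operatorname{rank}(F_*^e M)_P = \operatorname{rank}(M)\cdot q^{d+\alpha}$. The extra detail you give in justifying that rank identity (the base-change count plus Kunz's formula $[K:K^q]=q^{d+\alpha}$) is precisely what the paper leaves implicit, so the two proofs are the same argument at different levels of explicitness.
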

\begin{proof}
Asuume dim $M<$ dim $R$, then $s(M)=0$ by Proposition \ref{zero}.
We may assume dim $M=$ dim $R$. Therefore there is a minimal prime $\mathfrak{p}$ such that
$M$ has rank at $\mathfrak{p}$. Since the rank of $(F_*^eM)_{\mathfrak{p}}$ is rank $M\cdot q^{d+\alpha}$ and the rank of $(M^{b_q})_{\mathfrak{p}}$ is rank $M\cdot b_q$, we obtain $b_q \le q^{d+\alpha}$. This implies $s(M)\le1$
\end{proof}

\begin{lem}\label{MCM}
Let $(R, \mathfrak{m}, k)$ be a reduced F-finite Cohen-Macaulay local ring and $n$ be a non negative integer. There is a one to one correspondence between non-isomorphic surjective homomorphisms from $F_*^e\omega_R$ to $\omega_R^n$ and non-isomorphic injective homomorphisms from $R^n$ to $F_*^eR$ such that the cokernel is a maximal Cohen-Macaulay module.
\end{lem}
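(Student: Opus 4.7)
The plan is to invoke the contravariant exact duality $D = \operatorname{Hom}_R(-, \omega_R)$ on the category of maximal Cohen-Macaulay (MCM) $R$-modules, which is available because $R$ is Cohen-Macaulay with canonical module $\omega_R$. This duality satisfies $D^2 \cong \mathrm{id}$ on MCM modules and swaps $R$ and $\omega_R$. The crucial additional input is Grothendieck duality for the finite Frobenius $F^e$, which supplies the canonical identification
\[
\operatorname{Hom}_R(F_*^e R, \omega_R) \;\cong\; F_*^e \omega_R,
\]
since $F_*^e \omega_R$ is the canonical module of $R$ regarded as an $R$-algebra through $F^e$. Combined with MCM double duality applied to the MCM module $F_*^e R$, this also yields $D(F_*^e \omega_R) \cong F_*^e R$.

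With these identifications in hand, the argument becomes formal. Given a surjection $\phi : F_*^e \omega_R \twoheadrightarrow \omega_R^n$, let $K = \ker \phi$. Since $F_*^e \omega_R$ and $\omega_R^n$ are MCM, the depth lemma forces $K$ to be MCM as well, so applying $D$ to $0 \to K \to F_*^e \omega_R \to \omega_R^n \to 0$ preserves exactness and produces
\[
0 \longrightarrow R^n \longrightarrow F_*^e R \longrightarrow D(K) \longrightarrow 0,
\]
an injection $R^n \hookrightarrow F_*^e R$ whose cokernel $D(K)$ is MCM. Conversely, given an injection $R^n \hookrightarrow F_*^e R$ with MCM cokernel $C$, applying $D$ to the associated short exact sequence and invoking the same identifications produces a surjection $F_*^e \omega_R \twoheadrightarrow \omega_R^n$ with kernel $D(C)$. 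The relation $D^2 \cong \mathrm{id}$ on MCM modules shows these two assignments are mutually inverse up to isomorphism, and because $D$ is an equivalence of categories they carry isomorphic surjections to isomorphic injections and vice versa, giving the stated bijection on equivalence classes.

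I expect the principal obstacle to be a careful justification of the Grothendieck-duality identification $\operatorname{Hom}_R(F_*^e R, \omega_R) \cong F_*^e \omega_R$ as $R$-modules (with the correct Frobenius-twisted action on the right-hand side); this is where the Frobenius interacts non-trivially with the canonical module. Everything else --- MCM-ness of the kernel, exactness of $D$ on MCM, and matching of isomorphism classes --- is a routine application of the depth lemma and MCM duality.
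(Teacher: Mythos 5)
Your proposal is correct and takes essentially the same approach as the paper: apply the MCM duality $\operatorname{Hom}_R(-,\omega_R)$ to the short exact sequence, note the kernel (resp.\ cokernel) is MCM so duality is exact, and use Grothendieck duality for the finite Frobenius to identify the middle term. The paper states the identification in the equivalent form $\operatorname{Hom}_R(F_*^e\omega_R,\omega_R)\cong F_*^eR$ and uses $\operatorname{Ext}^1_R(\operatorname{coker}(g),\omega_R)=0$ to get surjectivity in the converse direction, but the mechanism is identical to yours.
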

\begin{proof}
Assume we have
\[
0\longrightarrow ker(f)\longrightarrow F_*^e\omega_R\longrightarrow  \omega_R^{b_q} \longrightarrow 0
\]
Since $F_*^e\omega_R$ and $\omega_R$ are maximal Cohen-Macaulay module, the kernel is maximal Cohen-Macaulay module. Taking $\omega_R$-dual, we obtain
\[
0\longrightarrow R^{b_q}\longrightarrow Hom(F_*^e\omega_R, \omega_R)\cong F_*^eR \longrightarrow  K\longrightarrow 0
\]
where $K$ is the $\omega_R$-dual of $ker(f)$. Therefore $K$ is maximal Cohen-Macaulay module. Conversely, assume we have
\[
0\longrightarrow R^{b_q}\longrightarrow F_*^eR \longrightarrow  coker(g)\longrightarrow 0
\]
such that $coker(g)$ is a maximal Cohen-Macaulay module. Taking $\omega_R$-dual, we obtain
\[
0\longrightarrow L\longrightarrow F_*^e\omega_R \longrightarrow  \omega_R^{b_q} \longrightarrow Ext^1_R(coker(g), \omega_R)=0
\]
where $L$ is the $ \omega_R$-dual of $coker(g)$. The last equality follows from Grothendieck duality. 

\end{proof}
\begin{rem}\label{max}
If there is a surjective homomorphism from $ker(f)$ to $\omega_R$, then there is a surjective homomorphism from $F_*^e\omega_R$ to $\omega_R^{b_q+1}$.
\end{rem}
\begin{proof}
Consider the following diagram
\[
\xymatrix{0\ar[r]&ker(f)\ar[r]\ar[d]&F_*^e\omega_R
\ar[r]\ar[d]&\omega_R^{b_q}\ar[r]\ar[d]&0\\
0\ar[r]&\omega_R\ar[r]&L
\ar[r]&\omega_R^{b_q}\ar[r]&0}
\]
where $L$ is the push out of the diagram. Since $L$ is in $Ext^1(\omega_R^{b_q}, \omega_R)=0$, the exact sequence splits. $L$ is isomorphic to  $\omega_R^{b_q+1}$.
\end{proof}

\begin{prop}\label{hikaku}
Let $(R, \mathfrak{m}, k)$ be a reduced F-finite Cohen-Macaulay local ring. Then $s(\omega_R)\geq s(R)$.

\end{prop}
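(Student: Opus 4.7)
The plan is to show that for every $q = p^e$ the surjective number $b_q$ dominates the Frobenius splitting number $a_q$, and then to take $\limsup$.

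First I would fix a decomposition $F_*^e R \cong R^{a_q} \oplus M_e$ as $R$-modules, where $M_e$ has no free direct summand. Since $R$ is Cohen--Macaulay and $F$-finite, $F_*^e R$ is a maximal Cohen--Macaulay $R$-module: if $x_1,\dots,x_d$ is a system of parameters of $R$, then $x_1^q,\dots,x_d^q$ is a regular sequence on $R$, and via the identity $x\cdot F_*^e r = F_*^e(x^q r)$ this transfers to a regular sequence on $F_*^e R$. As $M_e$ is a direct summand of the MCM module $F_*^e R$, it is itself MCM.

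Next, the splitting yields the short exact sequence
\[
0 \longrightarrow R^{a_q} \longrightarrow F_*^e R \longrightarrow M_e \longrightarrow 0,
\]
which is an injection of $R^{a_q}$ into $F_*^e R$ with maximal Cohen--Macaulay cokernel. Applying the correspondence in Lemma \ref{MCM} produces a surjective homomorphism $F_*^e \omega_R \twoheadrightarrow \omega_R^{a_q}$. Thus, by the definition of $b_q$, we get $b_q \geq a_q$ for every $e$.

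Dividing by $q^{d+\alpha}$ and passing to $\limsup$ on the left and (by Tucker's theorem) $\lim$ on the right gives $s(\omega_R) \geq s(R)$. The only non-trivial step is checking that $M_e$ is maximal Cohen--Macaulay so that Lemma \ref{MCM} applies; this however is immediate from the fact that a direct summand of an MCM module is MCM, once one notices $F_*^e R$ itself is MCM under our hypotheses. All the rest is bookkeeping through the correspondence established in Lemma \ref{MCM}.
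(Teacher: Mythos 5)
Your proof is correct and follows essentially the same route as the paper: both exploit that a surjection onto a free module splits to identify $M_e$ (resp.\ $\ker(f)$) as an MCM direct summand, then apply the correspondence of Lemma~\ref{MCM} to the resulting short exact sequence $0\to R^{a_q}\to F_*^eR\to M_e\to 0$ to conclude $b_q\geq a_q$. Your additional verification that $F_*^eR$ is maximal Cohen--Macaulay is a detail the paper leaves implicit, but the argument is otherwise identical.
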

\begin{proof}
Assume we have
\[
0\longrightarrow ker(f)\longrightarrow F_*^eR\longrightarrow  R^{a_q} \longrightarrow 0
\]
Since $R^{b_q}$ is projective, this exact sequence splits. Therefore $ker(f)$ is maximal Cohen-Macaulay module. We also have
\[
0\longrightarrow R^{a_q}\longrightarrow F_*^eR \longrightarrow  ker(f)\longrightarrow 0
\]
and this implies $s(\omega_R)\geq s(R)$ by Lemma \ref{MCM}.

\end{proof}

\begin{thm}\label{reg}
Let $(R, \mathfrak{m}, k)$ be a reduced F-finite Cohen-Macaulay local ring. Then $s(\omega_R)=1$ if and only if $R$ is regular.
\end{thm}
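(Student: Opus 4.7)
The forward implication is immediate: if $R$ is regular then $R$ is Gorenstein, so $\omega_R \cong R$, and the Huneke--Leuschke theorem gives $s(\omega_R) = s(R) = 1$. For the reverse, my plan is to assume $s(\omega_R) = 1$ and squeeze the minimal number of generators $\mu(F_*^e R)$ between two expressions, then pass to the limit to force $e_{HK}(\mathfrak{m}, R) \leq 1$; the Watanabe--Yoshida characterization of regularity via Hilbert--Kunz multiplicity then finishes the job.

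Pick a subsequence along which $u_e := b_q/q^{d+\alpha} \to 1$. For each such $e$, Lemma \ref{MCM} provides a short exact sequence $0 \to R^{b_q} \to F_*^e R \to C_e \to 0$ with $C_e$ maximal Cohen--Macaulay; localizing at a minimal prime shows $\mathrm{rank}(C_e) = q^{d+\alpha} - b_q$. Right-exactness of $-\otimes_R k$ gives $\mu(F_*^e R) \leq b_q + \mu(C_e)$. Since $C_e$ is MCM, any system of parameters $\underline{x}$ is a regular sequence on it, hence $\mu(C_e) \leq \ell(C_e/\underline{x} C_e) = e(\underline{x}, C_e)$; the associativity formula for multiplicity over the reduced equidimensional ring $R$ then yields $e(\underline{x}, C_e) = \mathrm{rank}(C_e) \cdot e(\underline{x}, R) = (q^{d+\alpha} - b_q)\, e(\underline{x}, R)$.

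On the other side, the standard identification $F_*^e R / \mathfrak{m} F_*^e R = F_*^e(R/\mathfrak{m}^{[q]})$ together with the length-scaling $\ell_R(F_*^e N) = q^\alpha \ell_R(N)$ gives $\mu(F_*^e R) = q^\alpha \ell(R/\mathfrak{m}^{[q]})$, so $\mu(F_*^e R)/q^{d+\alpha} \to e_{HK}(\mathfrak{m}, R)$. Dividing the chained inequality by $q^{d+\alpha}$ produces
\[
\frac{\mu(F_*^e R)}{q^{d+\alpha}} \leq u_e + (1 - u_e)\, e(\underline{x}, R),
\]
and letting $e \to \infty$ along the chosen subsequence (so $u_e \to 1$) yields $e_{HK}(\mathfrak{m}, R) \leq 1$. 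Combined with the standard lower bound $e_{HK}(\mathfrak{m}, R) \geq 1$ for unmixed local rings, equality holds, and then Watanabe--Yoshida forces $R$ to be regular.

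The step I expect to be the main obstacle is the MCM-to-generators estimate $\mu(C_e) \leq \mathrm{rank}(C_e) \cdot e(\underline{x}, R)$: it hinges on the MCM property of $C_e$ provided by Lemma \ref{MCM} (so that $\underline{x}$ acts regularly on $C_e$ and $\ell(C_e/\underline{x} C_e) = e(\underline{x}, C_e)$) together with the associativity of Hilbert--Samuel multiplicity, which needs brief verification over a reduced, equidimensional but possibly non-integral $R$. The remaining pieces---Frobenius length scaling, the asymptotic $\mu(F_*^e R)/q^{d+\alpha} \to e_{HK}(\mathfrak{m}, R)$, and the Watanabe--Yoshida regularity criterion---are standard inputs.
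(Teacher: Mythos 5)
Your proof is correct and reaches the same final target, $e_{HK}(\mathfrak{m},R)\le 1$, from the same starting point---the short exact sequence $0\to R^{b_q}\to F_*^eR\to C_e\to 0$ with Cohen--Macaulay cokernel supplied by Lemma \ref{MCM}---but the intermediate route is genuinely different. The paper fixes a \emph{minimal reduction} $\underline{x}$ of $\mathfrak{m}$ (hence $\ell(R/(\underline{x}))-\ell(R/\mathfrak{m})=e(\mathfrak{m})-1$), tensors the sequence with both $R/(\underline{x})$ and $R/\mathfrak{m}$, and uses the resulting snake diagram together with $\operatorname{Tor}_1^R(C_e,R/(\underline{x}))=0$ to obtain
\[
e_{HK}\bigl((\underline{x}),R\bigr)-e_{HK}(\mathfrak{m},R)\;\ge\; s(\omega_R)\,(e(\mathfrak{m})-1),
\]
which with $e_{HK}((\underline{x}),R)=e(\mathfrak{m})$ and $s(\omega_R)=1$ forces $e_{HK}(\mathfrak{m},R)\le 1$. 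You instead bound $\mu(F_*^eR)$ directly: right-exactness gives $\mu(F_*^eR)\le b_q+\mu(C_e)$, and the maximal Cohen--Macaulayness of $C_e$ gives $\mu(C_e)\le \ell(C_e/\underline{x}C_e)=e(\underline{x},C_e)=\operatorname{rank}(C_e)\cdot e(\underline{x},R)$ by the associativity formula (valid here because $R$ is reduced and equidimensional, so every minimal prime has residue field $R_{\mathfrak p}$ and, by Kunz's rank formula for $F$-finite rings, $\operatorname{rank}_{\mathfrak p}F_*^eR=q^{d+\alpha}$ for each of them). Dividing by $q^{d+\alpha}$ and letting $u_e\to 1$ along a subsequence kills the second term. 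Your version has two small advantages: it works with an \emph{arbitrary} system of parameters, so it sidesteps the existence of a minimal reduction and hence the implicit infinite-residue-field assumption in the paper's argument; and it replaces the diagram chase and Tor vanishing with the elementary bounds $\mu(C_e)\le\ell(C_e/\underline{x}C_e)$ and the associativity formula. The paper's version, on the other hand, is phrased so as to match the Hochster--Yao ``drop in Hilbert--Kunz multiplicity'' framework it reuses in the $F$-rationality theorem, which is presumably why that form was chosen.

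One minor point worth making explicit in a write-up: the equality $\operatorname{rank}_{\mathfrak p}(C_e)=q^{d+\alpha}-b_q$ at every minimal prime $\mathfrak p$ uses that $R$ is Cohen--Macaulay (hence equidimensional) together with Kunz's formula $\log_p[\kappa(\mathfrak p)^{1/p}:\kappa(\mathfrak p)]=\alpha+\dim R/\mathfrak p$, so that $F_*^eR$ has the \emph{same} generic rank $q^{d+\alpha}$ at each such $\mathfrak p$; this is exactly what lets you factor $\operatorname{rank}(C_e)$ out of the associativity sum. You flag this as the spot needing verification, and indeed that is the only place where care is required---the rest is standard.
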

\begin{proof}
If $R$ is regular, then the result follows from Kunz's theorem. Therefore It is enough to prove the converse.
Let $\underline{x}$ be a system of parameters generating a minimal reduction of $\mathfrak{m}$, then $l(\mathfrak{m}/\underline{x})=e(\mathfrak{m})-1$. By the definition of $b_q$ and Lemma \ref{MCM}, we have
\[
0\longrightarrow R^{b_q}\longrightarrow F_*^eR \longrightarrow  K\longrightarrow 0
\]
where $K$ is a Cohen-Macaulay module. By tensoring $R/ \mathfrak{m}$ and $R/(\underline{x})$, we have 

\[
\xymatrix{0\ar[r]&(R/(\underline{x}))^{b_q}\ar[r]\ar[d]&F_*^e(R/(\underline{x})^{[q]})
\ar[r]\ar[d]&K/(\underline{x})K\ar[r]\ar[d]&0\\
Tor^R_1(K, R/\mathfrak{m})\ar[r]&(R/\mathfrak{m})^{b_q}\ar[r]&F_*^e(R/\mathfrak{m}^{[q]})
\ar[r]&K/\mathfrak{m}K\ar[r]&0}
\]
The injectivity of the second map of the first line follows from
\[
Tor^R_1(K, R/(\underline{x}))\cong Ext^{d-1}_R(R/(\underline{x}), K)=0
\]
The vertical maps are surjective. We have
\[
l(F_*^e(R/(\underline{x})^{[q]})-l(F_*^e(R/\mathfrak{m}^{[q]})\\
\geq b_q(l(R/(\underline{x})-l(R/\mathfrak{m}))+
l(K/(\underline{x})K)-l(K/\mathfrak{m}K)\]
\[
\geq b_q(e-1)
 \]
Dividing both side by $q^{d+\alpha}$and taking the limit, we obtain
\[
e_{HK}((\underline{x}), R)-e_{HK}(\mathfrak{m}, R)\geq e-1
\]
 Since $(\underline{x})$ is minimal reduction for $\mathfrak{m}$ and $R$ is Cohen-Macaulay, $e_{HK}((\underline{x}), R)=e(\mathfrak{m}, R)=e$. This gives $e_{HK}(\mathfrak{m}, R)=1$ and $R$ is regular.
\end{proof}

\begin{prop}\label{gor}
Let $(R, \mathfrak{m}, k)$ be a reduced F-finite local ring such that $s(\omega_R)>0$.
Then $R$ is Gorenstein if and only if $s(R)=s(\omega_R)$.
\end{prop}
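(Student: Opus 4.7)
The ``if'' direction is immediate: if $R$ is Gorenstein, then $\omega_R\cong R$, so surjections $F_*^e\omega_R\twoheadrightarrow\omega_R^n$ coincide with surjections $F_*^eR\twoheadrightarrow R^n$; hence $b_q=a_q$ for every $e$ and $s(\omega_R)=s(R)$.

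For the converse I plan to argue the contrapositive: assuming $s(R)>0$ and $R$ is not Gorenstein, I will show $s(\omega_R)>s(R)$. Since $s(R)>0$ forces $R$ to be strongly $F$-regular by Theorem~\ref{main}(2), $R$ is Cohen-Macaulay. The first step is to exploit Grothendieck duality for the finite map $F^e\colon R\to R$, giving $F_*^e\omega_R\cong\operatorname{Hom}_R(F_*^eR,\omega_R)$; applying this to the $F$-signature decomposition $F_*^eR=R^{a_q}\oplus M_e$ yields
\[
F_*^e\omega_R\cong\omega_R^{a_q}\oplus N_e,\qquad N_e=\operatorname{Hom}_R(M_e,\omega_R).
\]
The $\omega_R$-reflexivity of maximal Cohen-Macaulay modules ensures that $N_e$ has no $\omega_R$ direct summand (otherwise, $\omega_R$-dualizing would exhibit a free summand of $M_e$, contradicting the choice of $M_e$).

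Projecting onto the first factor gives $b_q\geq a_q$, recovering Proposition~\ref{hikaku}. Letting $t_q$ denote the maximum $n$ with $N_e\twoheadrightarrow\omega_R^n$, one obtains $b_q\geq a_q+t_q$; by the $\omega_R$-dual form of Lemma~\ref{MCM}, $t_q$ also equals the maximum $n$ with $R^n\hookrightarrow M_e$ having a maximal Cohen-Macaulay cokernel. Thus the task reduces to showing $\limsup_{e\to\infty}t_q/q^{d+\alpha}>0$ when $R$ is not Gorenstein.

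This last step will be the main obstacle. My plan is to adapt the Hilbert-Kunz length computation in the proof of Theorem~\ref{reg} so as to detect the Cohen-Macaulay type $r$ of $R$. Take a system of parameters $\underline{x}$ generating a minimal reduction of $\mathfrak{m}$, and further pick $\Delta$ with $((\underline{x})\colon\Delta)=\mathfrak{m}$ as in Theorem~\ref{rsig}. Tensor the sequence $0\to R^{b_q}\to F_*^eR\to K_e\to 0$ from Lemma~\ref{MCM} successively with $R/(\underline{x})$, $R/(\underline{x},\Delta)$, and $R/\mathfrak{m}$; the vanishing $\operatorname{Tor}_1^R(K_e,R/(\underline{x}))\cong\operatorname{Ext}^{d-1}_R(R/(\underline{x}),K_e)=0$ (valid since $K_e$ is MCM) converts these into short exact sequences of finite-length modules. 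Passing to the limit after dividing by $q^{d+\alpha}$ should produce an inequality relating $s(\omega_R)$, $e_{HK}(\mathfrak{m},R)$, and the minimal relative Hilbert-Kunz multiplicity $r_R(R)$ afforded by Theorem~\ref{rsig} under $F$-rationality. The essential new ingredient compared to Theorem~\ref{reg} is that when $r>1$ the extra socle coming from $\omega_R/\mathfrak{m}\omega_R$ forces a strict gap in the resulting inequality, yielding $s(\omega_R)>s(R)$; pinning down the precise form of this gap is the technical core.
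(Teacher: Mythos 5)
The ``if'' direction and the opening moves of your ``only if'' argument are sound: the dual decomposition $F_*^e\omega_R\cong\omega_R^{a_q}\oplus N_e$ (with $a_q$ the free rank of $F_*^eR$), the observation that $N_e$ has no $\omega_R$-summand by reflexivity, and the inequality $b_q\geq a_q+t_q$ are all correct, and you correctly identify the burden as showing $\limsup t_q/q^{d+\alpha}>0$ when $R$ is not Gorenstein. However, your proposal stops at exactly that point: it is a plan for the decisive step, not a proof of it, and the plan as sketched is not likely to close the gap. In particular, you propose to tensor the sequence $0\to R^{b_q}\to F_*^eR\to K_e\to 0$ against quotients by a minimal reduction and by $(\underline{x},\Delta)$ to detect the Cohen--Macaulay type $r$; but the inequality you would extract (as in Theorem~\ref{reg}) bounds $b_q$ in terms of Hilbert--Kunz differences, and there is no visible mechanism by which ``extra socle from $\omega_R/\mathfrak m\omega_R$'' forces a \emph{strict} gap $s(\omega_R)>s(R)$. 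The quantity $r_R(R)$ from Theorem~\ref{rsig} controls positivity, not a gap between two already-positive limits. As stated this is a genuine gap in the argument.

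The paper's own argument is much shorter and does not invoke Hilbert--Kunz multiplicities at all. Decompose $F_*^e\omega_R\cong R^{a_q}\oplus\omega_R^{b_q}\oplus M$ with $M$ having no $R$- or $\omega_R$-summand. By Theorem~\ref{tucker}, $a_q/q^{d+\alpha}\to \mathrm{rank}(\omega_R)\cdot s(R)=s(R)$. Dualizing gives $F_*^eR\cong R^{b_q}\oplus\omega_R^{a_q}\oplus\mathrm{Hom}(M,\omega_R)$, and reflexivity shows $\mathrm{Hom}(M,\omega_R)$ has no free summand, so $b_q$ is the $e$-th Frobenius splitting number of $R$ and $b_q/q^{d+\alpha}\to s(R)$ as well. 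Since there is a surjection $R^{\mu(\omega_R)}\twoheadrightarrow\omega_R$ and $R$ is non-Gorenstein, the $R^{a_q}$ summand contributes roughly $a_q/\mu(\omega_R)$ additional copies of $\omega_R$ on top of the $\omega_R^{b_q}$ summand, giving
\[
s(\omega_R)\ \geq\ s(R)+\frac{s(R)}{\mu(\omega_R)}\ >\ s(R),
\]
a contradiction. Your framework can be completed the same way: note that since $\omega_R$ has no free summand for non-Gorenstein $R$, all free summands of $F_*^e\omega_R$ lie inside $N_e$, and by Theorem~\ref{tucker} their rank is $\sim s(R)q^{d+\alpha}$; grouping them in blocks of size $\mu(\omega_R)$ and surjecting each block onto $\omega_R$ gives $t_q\gtrsim s(R)q^{d+\alpha}/\mu(\omega_R)$, which is exactly what you need. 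The Hilbert--Kunz detour is unnecessary.
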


\begin{proof}
It is enough to show the converse. Since $s(R)=s(\omega_R)>0$, $R$ is $F$-regular.
Assume $R$ is not Gorenstein.
Let
\[
F_*^e\omega_R\cong R^{a_q}\oplus \omega_R^{b_q}\oplus M
\]
be a direct summand deconposition of $F_*^e\omega_R$ such that $M$ has no direct summand of $R$ or $\omega_R$.
We obtain
$
\displaystyle
\ \ \lim_{q \to \infty}\frac{a_q}{q^{d+\alpha}}=s(R)
 $ by Theorem \ref{tucker}. Taking $\omega_R$-dual, we have
\[
F_*^eR \cong R^{b_q}\oplus \omega_R^{a_q}\oplus Hom(M, \omega_R)
\]
This implies$
\displaystyle
\ \ \lim_{q \to \infty}\frac{b_q}{q^{d+\alpha}}=s(R)$. Let us denote the number of minimal generaters of $\omega_R$ by $\mu(\omega_R)$. There is a surjective homomorphism from $R^{\mu(\omega_R)} $ to $\omega_R$.
It follows that $s(\omega_R) \geq \displaystyle\frac{s(R)}{\mu(\omega_R)}+s(R) > s(R)$.
This is contradiction.
\end{proof}

\begin{rem}
We remark the assumption in Theorem \ref{gor} is essential.
Let $R$ be a non-Gorenstein, non-$F$-injective local ring, then $s(R)=s(\omega_R)=0$. But $R$ is not Gorenstein.
\end{rem}

\begin{cor}
Let $(R, \mathfrak{m}, k)$ be a reduced F-finite Cohen-Macaulay local ring. Then $s(R)=1$ if and only if $R$ is regular.
\end{cor}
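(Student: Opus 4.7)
The plan is to deduce this corollary directly from the three results already established in the paper: Theorem~\ref{reg} (the analogous statement for $\omega_R$), Proposition~\ref{hikaku} ($s(\omega_R) \ge s(R)$), and Proposition~\ref{value} ($s(M) \le 1$ for any finitely generated $M$). No new Hilbert--Kunz computation is needed.

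For the forward direction, I would start from the assumption $s(R) = 1$ and chain the inequalities
\[
1 = s(R) \le s(\omega_R) \le 1,
\]
where the first inequality is Proposition~\ref{hikaku} and the second is Proposition~\ref{value}. This forces $s(\omega_R) = 1$, and then Theorem~\ref{reg} immediately yields that $R$ is regular.

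For the converse, I would appeal to Kunz's theorem: if $R$ is regular and $F$-finite, then $F^e_* R$ is a free $R$-module of rank $q^{d+\alpha}$, so the obvious projection $F^e_* R \twoheadrightarrow R^{q^{d+\alpha}}$ gives $b_q \ge q^{d+\alpha}$, while the upper bound $b_q \le q^{d+\alpha}$ from the rank computation in Proposition~\ref{value} forces equality. Hence $s(R) = 1$.

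There is essentially no obstacle here; the corollary is a packaging of previously proved facts, and the only substantive ingredient (Kunz's theorem) is already invoked implicitly in the proof of Theorem~\ref{reg}. If anything, the mild subtlety is recording that the surjections used to compute $b_q$ for a regular ring actually achieve the maximal value $q^{d+\alpha}$, which is immediate from freeness of $F^e_* R$.
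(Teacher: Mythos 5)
Your argument matches the paper's proof of the corollary exactly: the forward direction chains $s(R)=1 \le s(\omega_R) \le 1$ via Proposition~\ref{hikaku} and Proposition~\ref{value}, then invokes Theorem~\ref{reg}. The paper leaves the converse implicit (it was already noted that $s(R)$ coincides with the Huneke--Leuschke $F$-signature and that regularity forces $s(R)=1$ by Kunz); you simply spell that direction out.
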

\begin{proof}
Assume $s(R)=1$, then $s(\omega_R)=1$ by Proposition \ref{hikaku}.
This implies that $R$ is regular by Theorem \ref{reg}.
\end{proof}

\begin{lem}\label{inj}
Let $(R, \mathfrak{m}, k)$ be an Artinian local ring of infinite residue field and $M$ be an $R$-module. Let V be a sub k-vector space of soc(R).
Assume
$l(\Delta M) \ge dimV$ for any $\Delta\in socR$, then there is an R-homomorphism $\phi : R \rightarrow M$ such that  $\phi$  is injective on V.
In paticular, if V=soc(R), then there is an injective R-homomorphism $\phi : R \rightarrow M.$
\end{lem}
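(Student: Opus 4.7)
I would prove the lemma by induction on $d = \dim_k V$, directly constructing $m \in M$ such that $\lambda_m : V \to M$, $v \mapsto vm$, is $k$-linearly injective; the required $R$-homomorphism is then $\phi(r) = rm$. The ``in particular'' clause follows at once from the standard fact that every nonzero ideal of an Artinian local ring meets the socle (take a nonzero element and multiply by $\mathfrak{m}$ until the product first vanishes, then use the previous stage). The base case $d = 1$ is immediate: any generator $v$ of $V$ satisfies $vM \neq 0$ by hypothesis, so any $m$ with $vm \neq 0$ works.

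For the inductive step, fix any hyperplane $V' \subset V$ and apply the inductive hypothesis to $V'$ (available since $\dim V' < \dim V$ forces the hypothesis for $V'$) to obtain $m_0 \in M$ with $\lambda_{m_0}|_{V'}$ injective. If $\lambda_{m_0}|_V$ is injective we are done; otherwise $\ker \lambda_{m_0}|_V$ is one-dimensional, spanned by some $w \in V \setminus V'$. The key step is to extend a basis $v_1, \ldots, v_{d-1}$ of $V'$ to a basis of $V$ by setting $v_d := w$, so that $v_d m_0 = 0$ while $v_1 m_0, \ldots, v_{d-1} m_0$ remain independent. By hypothesis $\dim_k v_d M \geq d > d - 1 = \dim_k \lambda_{m_0}(V)$, so I can choose $m' \in M$ with $e_d := v_d m' \notin \lambda_{m_0}(V)$; setting $e_i := v_i m_0$ for $i < d$, the $e_j$ form a basis of a $d$-dimensional subspace $L_0 \subseteq M$.

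I then claim $\lambda_{m(t)}|_V$ is injective for cofinitely many $t \in k$, where $m(t) := m_0 + tm'$; since $k$ is infinite, this suffices. To verify the claim, fix any $k$-linear retraction $\pi : M \to L_0$ and form the $d \times d$ matrix $M(t)$ of $\pi \circ \lambda_{m(t)}|_V$ in the bases $\{v_i\}$ and $\{e_j\}$. The last column is $\pi(v_d m(t)) = \pi(v_d m_0) + t\, \pi(v_d m') = 0 + t e_d$, i.e., it has a $t$ in the last entry and $0$ elsewhere, so expanding $\det M(t)$ along it yields $\det M(t) = t \cdot \det N(t)$, where $N(t)$ is the top-left $(d-1) \times (d-1)$ minor; and $N(0) = I$ since $\pi(v_i m_0) = e_i$ for $i < d$, so $\det N(t)$ has constant term $1$. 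Hence $\det M(t) = t + O(t^2)$ is a nonzero polynomial in $t$, and for all but finitely many $t \in k$ the matrix $M(t)$ is invertible, forcing $\pi \circ \lambda_{m(t)}|_V$, and therefore $\lambda_{m(t)}|_V$, to be injective. The main obstacle I anticipate is recognizing that the basis choice $v_d := w$ is what makes the determinant tractable: without it, one must track the scalars $c_i$ arising from the relation $v_d m_0 = \sum c_i v_i m_0$ and verify a nontrivial cancellation in the cofactor expansion.
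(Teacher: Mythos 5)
Your proof is correct and follows essentially the same route as the paper's: induct on $\dim V$, obtain $m_0$ for a hyperplane, take the one-dimensional kernel direction $w$ as the last basis vector, use the length hypothesis to pick $m'$ with $wm'$ outside the span of the $v_im_0$, and show the determinant of the map induced by $m_0+tm'$ is a nonzero polynomial in $t$ (nonzero linear term, vanishing constant term), so a generic choice works because $k$ is infinite. Your introduction of the retraction $\pi$ onto $L_0=\operatorname{span}(e_1,\dots,e_d)$ with $e_i=v_im_0$ is a slightly cleaner way to package what the paper does by simply taking ``a minor'' of the matrix $(\Delta_1 m,\dots,\Delta_{n-1}m,\Delta m')$ and expanding by multilinearity, but it is the same computation.
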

\begin{proof}
We prove the claim by induction on dimV.
Assume dim$V$=1, then there is an socle element $\Delta_1$ which generates $V$.
By the assumption, there is an element $m$ in $M$ such that  $\Delta_1 m \neq 0$. This $m$ gives the map from $R$ to $M$. Assume dim$V$=n, then there are socle elements $\Delta_1,...,\Delta_n$ which generate $V$. 
Let $W$ be the $k$-vector space generated by $\Delta_1,...,\Delta_{n-1}$. By induction, there is a R-homomorphism $\phi : R \rightarrow M$ such that  $\phi$  is injective on $W$. We put $\phi (1)=m$. If this map is injective on $V$, there is nothing to prove. So we may assume $\phi$  is not injective on $V$. Then there is an element $\Delta \in V$ such that $\Delta m=0$.  In this case, $\Delta_1,...,\Delta_{n-1}, \Delta$ is a basis of $V$. Since $l(\Delta M) > $dim$W$, there is an element $m'$ in $M$ such that $\Delta_1 m,...,\Delta_{n-1} m, \Delta m'$ are linearly independent in $soc(R)M$. This implies that a minor of the matrix ($\Delta_1 m,...,\Delta_{n-1} m, \Delta m'$)
is not zero. Let us  Take $c \in R$ and put $n=m+cm'$. If $c$ is general, $c$ gives the map from $R$ to $M$ which satisfy the desired condition. To confirm this, we denote the corresponding minor of the matrix $A$ by $minorA$. With this notation,
\[
minor(\Delta_1 n,...,\Delta_{n-1} n, \Delta n)=minor(\Delta_1 m+c\Delta_1m',...,\Delta_{n-1} m+c\Delta_{n-1}m', c\Delta m')
\]
\[
=c\cdot minor(\Delta_1 m,...,\Delta_{n-1} m, \Delta m')+c^2f(c)
\]
where $f$ is a polynomial with a variable.\\ 
Since $minor(\Delta_1 m,...,\Delta_{n-1} m, \Delta m')$ is not zero,  $minor(\Delta_1 n,...,\Delta_{n-1} n, \Delta n)$ is non zero polynomial with the variable $c$. Therefore for general $c$, the determinant is not zero. This implies $\Delta_1 n,...,\Delta_{n-1} n, \Delta n$ is linearly independent and the map induced by $n$ is injective on $V$.

\end{proof}

\begin{lem}\label{matlis}
Let $R$ be a Noetherian local ring and $M$ be an $R$-module of finite length.Denote the injective hull of the residue field by $E$. Then  $l(xM)=l(Hom(xM, E))=l(x(Hom(M, E))$.  
\end{lem}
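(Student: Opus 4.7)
The plan is to reduce everything to Matlis duality applied to the short exact sequence describing $xM$ as the image of multiplication by $x$ on $M$. Here (unspecified in the statement but clear from context) $x$ is an element of $R$, and I write $(-)^{\vee}=\mathrm{Hom}_{R}(-,E)$ for Matlis duality. Since $M$ has finite length, so do all modules in sight, and Matlis duality preserves length, i.e.\ $l(N)=l(N^{\vee})$ for every finite length $R$-module $N$.

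The first equality $l(xM)=l(\mathrm{Hom}(xM,E))$ is then immediate from Matlis duality applied to $N=xM$ (which has finite length because it is a submodule of $M$).

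For the second equality, I would factor multiplication by $x$ on $M$ as
\[
M\twoheadrightarrow xM\hookrightarrow M,
\]
with kernel $(0:_{M}x)$ of the first map. Applying the exact functor $\mathrm{Hom}(-,E)$ transforms this into a factorization of multiplication by $x$ on $M^{\vee}$,
\[
M^{\vee}\twoheadrightarrow (xM)^{\vee}\hookrightarrow M^{\vee},
\]
so the composite map $x\colon M^{\vee}\to M^{\vee}$ has image equal both to $xM^{\vee}$ (by definition) and to the image of the injection $(xM)^{\vee}\hookrightarrow M^{\vee}$ (by functoriality of the factorization). Hence $xM^{\vee}\cong (xM)^{\vee}=\mathrm{Hom}(xM,E)$, and taking lengths gives $l(x\,\mathrm{Hom}(M,E))=l(\mathrm{Hom}(xM,E))$, completing the chain of equalities.

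There is no genuine obstacle here; the only mild subtlety is making sure the factorization of multiplication by $x$ really dualizes the way one wants, which is just functoriality of $\mathrm{Hom}(-,E)$ applied to the factorization $M\twoheadrightarrow xM\hookrightarrow M$. Everything else is bookkeeping with Matlis duality on finite length modules.
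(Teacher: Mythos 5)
Your proof is correct and takes essentially the same route as the paper: both identify $x\,\mathrm{Hom}(M,E)$ with $\mathrm{Hom}(xM,E)$, and both ultimately rest on the injectivity of $E$ (the paper by extending a map $xM\to E$ to $M\to E$, you by noting that $\mathrm{Hom}(-,E)$ is exact so dualizing the image factorization $M\twoheadrightarrow xM\hookrightarrow M$ of multiplication by $x$ again yields the image factorization). The paper writes down the isomorphism by hand, while you derive it from functoriality of the factorization, but the underlying argument is the same.
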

\begin{proof}
The first equality follows from the Matlis duality. It is enough to show $Hom(xM, E)$ is isomorphic to $x(Hom(M, E))$. From the $R$-module structure of $Hom(M, E)$, we can regard $x(Hom(M, E))$ as submodule of $Hom(xM, E)$. Conversely, if we take $\phi \in Hom(xM, E)$, we can extend $\phi$ to $\widetilde{\phi} \in Hom(M, E)$ since $E$ is injective module. This implies that the inclusion is surjective. 

\end{proof}

\begin{lem}\label{length}
Let $R$ be a Noetherian local ring and $M$ be an $R$-module. Let $\underline{x}$ be a system of parameters. There is a natural number $c$ such that
$c \ge l(Tor^R_1(\omega_R, R/(\underline{x}, \Delta)))$ for any $\Delta\in SocR/(\underline{x})$.

\end{lem}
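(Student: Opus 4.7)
The plan is to realize $R/(\underline{x},\Delta)$ as the cokernel of a single map out of $R/\mathfrak{m}$ and then extract the bound from the long exact sequence of $\operatorname{Tor}(\omega_R,-)$. For each $\Delta \in \operatorname{Soc}(R/(\underline{x}))$, I would fix a lift $\tilde\Delta \in R$; since $\mathfrak{m}\tilde\Delta \subset (\underline{x})$, multiplication by $\tilde\Delta$ gives a well-defined $R$-linear map $R/\mathfrak{m} \to R/(\underline{x})$, which is injective whenever $\Delta \neq 0$ (as any nonzero map out of a simple module is injective). This assembles into the short exact sequence
\[
0 \longrightarrow R/\mathfrak{m} \xrightarrow{\;\cdot \tilde\Delta\;} R/(\underline{x}) \longrightarrow R/(\underline{x},\Delta) \longrightarrow 0.
\]
The case $\Delta = 0$ is trivial, since then $R/(\underline{x},\Delta) = R/(\underline{x})$.

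Applying $\omega_R \otimes_R -$, the relevant piece of the long exact sequence reads
\[
\operatorname{Tor}_1^R(\omega_R, R/(\underline{x})) \longrightarrow \operatorname{Tor}_1^R(\omega_R, R/(\underline{x},\Delta)) \longrightarrow \omega_R/\mathfrak{m}\omega_R,
\]
from which I would immediately read off
\[
l\bigl(\operatorname{Tor}_1^R(\omega_R, R/(\underline{x},\Delta))\bigr) \;\le\; l\bigl(\operatorname{Tor}_1^R(\omega_R, R/(\underline{x}))\bigr) + l\bigl(\omega_R/\mathfrak{m}\omega_R\bigr).
\]
The right hand side is manifestly independent of $\Delta$, so it can be taken as the desired constant $c$.

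It remains to confirm that both quantities on the right are actually finite. The module $\omega_R/\mathfrak{m}\omega_R$ is a finite-dimensional $k$-vector space because $\omega_R$ is finitely generated. For $\operatorname{Tor}_1^R(\omega_R, R/(\underline{x}))$, computing via the Koszul complex $K_\bullet(\underline{x};\omega_R)$ (or any free resolution of $R/(\underline{x})$), each term of the complex is finitely generated and the homology is annihilated by $(\underline{x})$, hence supported at $\mathfrak{m}$ and therefore of finite length. There is no real obstacle: the entire $\Delta$-dependence is absorbed into a single quotient $\omega_R/\mathfrak{m}\omega_R$ whose length is controlled once and for all by $\omega_R$.
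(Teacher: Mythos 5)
Your proof is correct, but it takes a genuinely different route from the paper's. The paper fixes a finite free resolution $X_\bullet \to \omega_R$, observes that $\operatorname{Tor}_1^R(\omega_R, R/(\underline{x},\Delta))$ is a subquotient of $X_1 \otimes R/(\underline{x},\Delta)$, and then notes that $X_1 \otimes R/(\underline{x},\Delta)$ is a quotient of $X_1 \otimes R/(\underline{x})$; so $c = l(X_1 \otimes R/(\underline{x}))$ already works, with no long exact sequence and no case distinction. You instead realize $R/(\underline{x},\Delta)$ as the cokernel of the multiplication map $R/\mathfrak{m} \xrightarrow{\cdot\tilde\Delta} R/(\underline{x})$ (which is legitimate: $\mathfrak{m}\tilde\Delta \subseteq (\underline{x})$ makes the map well-defined, and any nonzero map out of the simple module $R/\mathfrak{m}$ is injective, with the $\Delta = 0$ case handled separately), then read off the bound $c = l(\operatorname{Tor}_1^R(\omega_R, R/(\underline{x}))) + l(\omega_R/\mathfrak{m}\omega_R)$ from the long exact sequence of $\operatorname{Tor}(\omega_R,-)$. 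Your argument requires slightly more machinery and a case split, but it isolates the $\Delta$-dependence cleanly into a single $\omega_R/\mathfrak{m}\omega_R$ term and in general produces a smaller constant; the paper's is shorter and more uniform because the bound is extracted at the level of the chain complex rather than its homology. Both bounds are finite for the same reason (everything is a subquotient of finitely many copies of $R/(\underline{x})$, which has finite length).
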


\begin{proof}
Let $X_{.}$ be a free resolution of $\omega_R$. Then there is a natural surjection from $X_1\otimes R/(\underline{x})$ to $X_1\otimes R/(\underline{x}, \Delta)$. Since $Tor^R_1(\omega_R, R/(\underline{x}, \Delta))$ is sub-quotient of $X_1\otimes R/(\underline{x}, \Delta)$, $l(X_1\otimes R/(\underline{x})) \ge l(Tor^R_1(\omega_R, R/(\underline{x}, \Delta)))$ holds.

\end{proof}
\begin{thm}
Let $(R, \mathfrak{m})$ be a reduced F-finite Cohen-Macaulay local ring. Then $s(\omega_R)>0$ if and only if $R$ is F-rational.
\end{thm}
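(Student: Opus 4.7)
The plan is to bracket $s(\omega_R)$ between positive multiples of the Hochster--Yao invariant $r_R(R)$ from Theorem \ref{rsig}, making positivity of $s(\omega_R)$ equivalent to F-rationality.

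For the easier ``$s(\omega_R)>0 \Rightarrow R$ F-rational'' direction, fix any s.o.p.\ $\underline{x}$ and $\Delta$ with $((\underline{x}):\Delta)=\mathfrak{m}$. A surjection $F_*^e\omega_R \twoheadrightarrow \omega_R^{b_q}$ yields via Lemma \ref{MCM} a short exact sequence $0 \to R^{b_q} \to F_*^eR \to K \to 0$ with $K$ maximal Cohen--Macaulay. Tensoring with $R/(\underline{x})$ is exact (as $\underline{x}$ is regular on $K$) and with $R/(\underline{x},\Delta)$ is right-exact; taking lengths, subtracting, and using the socle identity $l(R/(\underline{x}))-l(R/(\underline{x},\Delta))=1$ gives $q^\alpha[l(R/(\underline{x})^{[q]})-l(R/(\underline{x},\Delta)^{[q]})] \ge b_q$. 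Dividing by $q^{d+\alpha}$ and passing to the limit, $e_{HK}((\underline{x}),R)-e_{HK}((\underline{x},\Delta),R) \ge s(\omega_R)$. Taking the infimum over admissible pairs, $r_R(R) \ge s(\omega_R)>0$, so $R$ is F-rational by Theorem \ref{rsig}.

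For the converse, assume $R$ is F-rational, so $r := r_R(R) > 0$. The infinite residue field hypothesis (required by Lemma \ref{inj}) allows us to pick a minimal reduction $\underline{x}$ of $\mathfrak{m}$; set $\mu := \dim_k\operatorname{soc}(R/(\underline{x}))$. The key length identity $l_R(\Delta\cdot F_*^e(R/(\underline{x})^{[q]})) = q^\alpha[l(R/(\underline{x})^{[q]}) - l(R/(\underline{x},\Delta)^{[q]})]$ for socle elements $\Delta$ gives, by F-rationality, an asymptotic lower bound $(r-\varepsilon)q^{d+\alpha}$. Iterate Lemma \ref{inj} on $F_*^e(R/(\underline{x})^{[q]})$ viewed as an $R/(\underline{x})$-module: after $n$ successful injections the quotient $M_n$ still satisfies $l(\Delta M_n) \ge l(\Delta M)-n\mu$ (since the socle of the image of $(R/(\underline{x}))^n$ has dimension $n\mu$), so the iteration continues as long as $n \lesssim (r-\varepsilon)q^{d+\alpha}/\mu$. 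This produces an injection $\bar\phi:(R/(\underline{x}))^{b_q}\hookrightarrow F_*^e(R/(\underline{x})^{[q]})$ with $b_q \ge c\,q^{d+\alpha}$ for some $c > 0$.

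To globalize $\bar\phi$ to a surjection $F_*^e\omega_R\twoheadrightarrow \omega_R^{b_q}$, I apply the Matlis dual over $R/(\underline{x})$: via the Koszul resolution of $R/(\underline{x})$ together with $\operatorname{Tor}_i^R(F_*^eR, R/(\underline{x}))=0$ (from MCM-ness of $F_*^eR$), Grothendieck local duality identifies the Matlis dual of $F_*^e(R/(\underline{x})^{[q]})$ with $F_*^e\omega_R/(\underline{x})F_*^e\omega_R$, and the Matlis dual of $(R/(\underline{x}))^{b_q}$ with $\omega_R^{b_q}/(\underline{x})\omega_R^{b_q}$. So one obtains a surjection $\bar\psi: F_*^e\omega_R/(\underline{x})F_*^e\omega_R\twoheadrightarrow \omega_R^{b_q}/(\underline{x})\omega_R^{b_q}$. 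This lifts to $\psi: F_*^e\omega_R \to \omega_R^{b_q}$ because $\operatorname{Ext}^1_R(F_*^e\omega_R, \omega_R^{b_q}/(x_1,\ldots,x_k)\omega_R^{b_q})=0$ for every $k$ (induct on $k$ via the Koszul short exact sequences $0 \to \omega_R^{b_q}/(x_1,\ldots,x_{k-1}) \xrightarrow{x_k} \omega_R^{b_q}/(x_1,\ldots,x_{k-1}) \to \omega_R^{b_q}/(x_1,\ldots,x_k) \to 0$, using that $\operatorname{Ext}^i(F_*^e\omega_R,\omega_R)=0$ for $i\ge 1$ since $F_*^e\omega_R$ is MCM). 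Nakayama then promotes surjectivity of $\bar\psi$ to surjectivity of $\psi$, giving $s(\omega_R)\ge c > 0$. The main obstacle is this converse direction, specifically the socle-length bookkeeping required to iterate Lemma \ref{inj} far enough and the Grothendieck--Koszul Ext-vanishing argument needed to pass from the modular surjection back up to a genuine surjection onto $\omega_R^{b_q}$.
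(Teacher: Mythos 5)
Your forward direction ($s(\omega_R)>0\Rightarrow R$ F-rational) matches the paper's argument essentially verbatim, and your overall strategy of bracketing $s(\omega_R)$ against the Hochster--Yao quantity $r_R(R)$ is the right one. The gap is in the converse, specifically in the iteration step.

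You claim that iterating Lemma \ref{inj} on $M := F_*^e(R/(\underline{x})^{[q]})$, passing to quotients $M_n := M_{n-1}/\iota_n(R/(\underline{x}))$, produces an injection $(R/(\underline{x}))^{b_q}\hookrightarrow M$. It does not. Each pass of Lemma \ref{inj} gives one embedded copy of $R/(\underline{x})$ in the \emph{current quotient}; pulling these back to $M$ yields only a filtration $0=N_0\subsetneq N_1\subsetneq\cdots\subsetneq N_n\subseteq M$ with $N_i/N_{i-1}\cong R/(\underline{x})$. To obtain a genuine monomorphism $(R/(\underline{x}))^n\hookrightarrow M$ you would need each extension $0\to N_{i-1}\to N_i\to R/(\underline{x})\to 0$ to split, i.e.\ $\operatorname{Ext}^1_{R/(\underline{x})}(R/(\underline{x}),N_{i-1})=0$, which is automatic only when $R/(\underline{x})$ is self-injective, that is, when $R$ is Gorenstein --- exactly the hypothesis you cannot make. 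Without the direct-sum embedding, Matlis duality only gives a chain of surjections $M^\vee \twoheadrightarrow \cdots$, each with cokernel $\omega_R/(\underline{x})\omega_R$, not a single surjection $M^\vee\twoheadrightarrow(\omega_R/(\underline{x})\omega_R)^{b_q}$, and the rest of your globalization argument has nothing to lift. (Your socle bookkeeping $l(\Delta M_n)\ge l(\Delta M)-n\mu$ is fine; the problem is purely the failure of the filtration to split.)

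The paper circumvents this entirely by arguing by contradiction at a single step instead of constructing $b_q$ maps from scratch. Take the surjection $F_*^e\omega_R\twoheadrightarrow\omega_R^{b_q}$ realizing the \emph{maximal} $b_q$, with MCM kernel $K$ (via Lemma \ref{MCM}). If $s(\omega_R)=0$ while $R$ is F-rational, the length comparison (with the uniform $\operatorname{Tor}$ bound from Lemma \ref{length}) forces $l(\Delta(K/(\underline{x})K))>l(\operatorname{soc}(R/(\underline{x})))$ for all socle $\Delta$ and some $q$. Then Lemmas \ref{matlis} and \ref{inj} produce a single surjection $K\twoheadrightarrow\omega_R$, and Remark \ref{max} (a pushout plus $\operatorname{Ext}^1(\omega_R^{b_q},\omega_R)=0$ from MCM-ness) upgrades this to a surjection $F_*^e\omega_R\twoheadrightarrow\omega_R^{b_q+1}$, contradicting maximality of $b_q$. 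The maximality assumption plus Remark \ref{max} does the ``iteration'' for you implicitly and never requires any extension to split. If you want a genuinely constructive version, iterate at the level of the MCM kernels $K_n$ of surjections $F_*^e\omega_R\twoheadrightarrow\omega_R^n$, extending one copy of $\omega_R$ at a time via Remark \ref{max}, rather than stacking copies of $R/(\underline{x})$ inside $F_*^e(R/(\underline{x})^{[q]})$.
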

\begin{proof}

Let $\underline{x}$ be arbitrary system of parameters and $\Delta$ be a element in the socle of $R/\underline{x}$. By the definition of $b_q$, we have\\
\[
0\longrightarrow K\longrightarrow F_*^e\omega_R \longrightarrow  \omega_R^{b_q} \longrightarrow 0
\]
where $K$ is the kernel of the surjective map. Tensoring $R/(\underline{x})$ and $R/(\underline{x}, \Delta)$, we have

\[
{\tiny
\xymatrix{0\ar[r]&K/(\underline{x})K\ar[r]\ar[d]&F_*^e(\omega_R/(\underline{x})^{[q]}\omega_R)
\ar[r]\ar[d]&(\omega_R/(\underline{x})\omega_R)^{b_q}\ar[r]\ar[d]&0\\
Tor^R_1(\omega_R, R/(\underline{x}, \Delta))^{b_q}\ar[r]&K/(\underline{x}, \Delta)K\ar[r]&F_*^e(\omega_R/(\underline{x}, \Delta)^{[q]}\omega_R)
\ar[r]&(\omega_R/(\underline{x}, \Delta)\omega_R)^{b_q}\ar[r]&0}
}
\]
The injectivity of the first map of the first line follows from
\[
Tor^R_1(\omega_R, R/(\underline{x}))\cong Ext^{d-1}_R(R/(\underline{x}), \omega_R)=0
\]
The vertical maps are surjective. We have
\[
l(F_*^e(\omega_R/(\underline{x})^{[q]}\omega_R))-l(F_*^e(\omega_R/(\underline{x}, \Delta)^{[q]}\omega_R))\geq 
\]
\[
b_q(l(\omega_R/(\underline{x})\omega_R)-l(\omega_R/(\underline{x}, \Delta)\omega_R))+
l(K/(\underline{x})K)-l(K/(\underline{x}, \Delta)K))
\geq b_q
 \]
Dividing both side by $q^{d+\alpha}$and taking the limit, we obtain
\[e_{HK}((\underline{x}), R)-e_{HK}((\underline{x}, \Delta), R)\geq s(\omega_R)
\]
Assume $R$ is not F-rational, then there exists $\Delta$ such that $\Delta$ is in the tight closure of $(\underline{x})$. This implies  $s(\omega_R)=0$.
Conversely, assume $R$ is F-rational. From the diagram, we have
\[
b_q(l(\omega_R/(\underline{x})\omega_R)-l(\omega_R/(\underline{x}, \Delta)\omega_R)+l(Tor^R_1(\omega_R, R/(\underline{x}, \Delta)))+
l(K/(\underline{x})K)-l(K/(\underline{x}, \Delta)K)
\]
\[\geq 
l(F_*^e(\omega_R/(\underline{x})^{[q]}\omega_R))-l(F_*^e(\omega_R/(\underline{x}, \Delta)^{[q]}\omega_R))
\]
If $s(\omega_R)=0$, then this means the order of $b_q$ is less than $q^{d+\alpha}$. Since $R$ is F-rational, the order of the right hand side of the inequality is $q^{d+\alpha}$. Lemma \ref{length} and Theorem \ref{rsig} implies that there is a $q$ such that for all $\Delta \in soc(R)$,  $l(\Delta(K/(\underline{x})K))=l(K/(\underline{x})K)-l(K/(\underline{x}, \Delta)K)$ is bigger than $l(soc(R/\underline{x}))$. By Lemma \ref{matlis} and Lemma \ref{inj}, there is an injective homomorphism $\phi : R/(\underline{x}) \rightarrow Hom(K/(\underline{x})K, E(k))$. This implies that there is an surjective homomorphism $\phi : K \rightarrow \omega_R$ by Lemma \ref{MCM}. This contradicts to the maximality of $b_q$ by the Remark \ref{max}.
 
\end{proof}

\begin{eg}\label{ver}
Let $R = k[[x^n, x^{n-1}y, \ldots, y^n]]$, the
$n$-th Veronese
subring of $k[[x,y]]$, where $k$ is a perfect field of positive
characteristic $p$.  Assume that $n\geq 2$ and $p\!\!\!\not| n$.  Then
$R$ has finite CM type.  The indecomposable
nonfree
MCM $R$-modules are the fractional ideals $I_1 = (x,y)$, $I_2 = (x^2,
xy,
y^2)$, \ldots, $I_{n-1} = (x^{n-1}, x^{n-2}y, \ldots, y^{n-1})$.  Denote $R$ also by $I_0$.
Then we can decompose $F_*I_l$ by using these modules.
\[
F_*^eI_l\cong \oplus_{i=0}^{n-1}I_i^{a_{i, l}}
\]
We can also show the order of $a_{i, l}$'s are same and equal to $q^2/n$. 
Furthermore, we can easily show the following things.
\begin{enumerate}
\item If $l<k$, then any $R$-module homomorphism $f$ : $I_k \rightarrow I_l$ factor through $\mathfrak{m}I_l$.
\item If $l>k$, then there is no surjective homomorphism $g$: $I_k\rightarrow I_l$
\item If $l>k$, then there exists a surjective homomorphism $h$: $I_k\oplus I_{l-k-1} \rightarrow I_l$

\end{enumerate}
We compute $s(I_l)$. By (1), there is no contribution from $I_k$ $(l<k)$.
Since there is a natural surjection $id: I_l \rightarrow I_l$, the contribution from $I_l$ is one to one. 
By (2) and (3), the contribution from $I_k$ $(l>k)$ is two to one.
Therefore,
\[
s(I_l)=\frac{1}{n}+\frac{l}{2}\cdot \frac{1}{n}=\frac{l+2}{2n}
\]
In paticular, $s(R)=s(I_0)=1/n, s(\omega_R)=s(I_{n-2})=1/2$.
$R$ is Gorenstein if and only if $n=2$ by Proposition \ref{gor}.

\end{eg}

\section{Questions}
In this section, we define $the$ $dual$ $F$-$signature$ $of$ $pairs$ and  $the$ $F$-$surjective$ $ratio$  to give questions about these.\\
The $F$-signature of pairs was introduced by M. Blickle, K. Schwede and K.Tucker in \cite{BlickleSchwedeTuckerFsigPairs} to solve the conjecture about the $F$-splitting ratio by M. Aberbach and F. Enescu. Firstly, We define $the$ $F$-$surjective$ $ratio$ which is a generalization of $the$ $F$-$splitting$ $ratio$ defined by M. Aberbach and F. Enescu \cite{AberbachEnescuStructureOfFPure}. 

\begin{defi}
Let $(R, \mathfrak{m}, k)$ be a reduced $F$-finite local ring  of characteristic $p>0$ and $M$ be an $R$-module. For each natural number $e$, Put $q=p^e, \alpha = log_p[k:k^p]$, and $ b_{q} = \max \{  \ n  \ |  \  ^\exists  F_*^eM \twoheadrightarrow  M^n  \ \}$
and for each  $i\in \mathbb{N}$ define 
\[
r_i(M) =\lim \sup_{e \to \infty}\frac{b_q}{q^{i+\alpha}}
 \]
Consider about the set 
\[
D=\{r_i(M)| i\in \mathbb{N}\} \in \mathbb{R} \cup \{\infty\}
\]
We define $the$ $F$-$surjective$ $ratio$ by
\[
r(M)= \max\{ D-\{\infty\}\}
\]
We call the minimal number $i$ such that $r_i(M)=r(M)$ holds $F$-surjective dimension of $M$.
\end{defi}
\begin{eg}\label{rat}
Let $S_n=k[[x_1,x_2,\dots ,x_n]]$ be a formal power series ring of $n$ variables.
There are natural surjections $\phi_i: S_n \rightarrow S_i=k[[x_1,x_2,\dots ,x_i]]$ for $i\le n$.
We regard $S_i$ as $S_n$-module by using these maps. Then the $F$-surjective dimension of $S_i$ is i and $r(S_i)=1$. 

\end{eg}

We can easliy see that $r(R)$ coincide with the $F$-splitting ratio defined by M. Aberbach and F. Enescu. M. Blickle, K. Schwede and K.Tucker proved the positivity of the $F$-splitting ratio characterize the $F$-purity of $R$ in \cite{BlickleSchwedeTuckerFsigPairs}. The conjecture is the same thing holds for the $F$-surjective ratio.

\begin{conj} 
Let $(R, \mathfrak{m}, k)$ be a reduced Cohen-Macaulay $F$-finite local ring  of characteristic $p>0$ and $\omega_R$ be a canonical module. Then $R$ is $F$-injective if and only if $r(\omega_R)$ is positive.
\end{conj}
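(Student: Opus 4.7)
The plan is to mirror, on the canonical module, the Aberbach--Enescu analysis of the $F$-splitting ratio, transported through the $\omega_R$-duality central to Lemma \ref{MCM}. First I would establish the following reformulation of $F$-injectivity: \emph{for a Cohen--Macaulay $F$-finite local ring $R$, the ring $R$ is $F$-injective if and only if there exist $e\ge 1$ and an $R$-module surjection $F^e_*\omega_R \twoheadrightarrow \omega_R$}. The ``only if'' direction is the Matlis dual of the injectivity of Frobenius on $H^d_\mathfrak{m}(R)$, realized concretely as the surjectivity of the canonical trace $\tau_e : F^e_*\omega_R \to \omega_R$. For the converse, one uses the Grothendieck duality identification $\operatorname{Hom}_R(F^e_*\omega_R, \omega_R) \cong F^e_*R$, under which every $R$-linear map has the form $\phi_u(x) = \tau_e(u\cdot x)$ for some $u \in F^e_*R$; since $\phi_u(F^e_*\omega_R) \subseteq \tau_e(F^e_*\omega_R)$, surjectivity of some $\phi_u$ automatically forces surjectivity of $\tau_e$.

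With the reformulation in hand, the forward direction of the conjecture is essentially immediate: if $r(\omega_R)>0$ then $r_i(\omega_R)>0$ for some finite $i\in\mathbb{N}$, so $b_q \ge 1$ for infinitely many $q$, and composing the resulting surjection $F^e_*\omega_R \twoheadrightarrow \omega_R^{b_q}$ with any projection yields a surjection $F^e_*\omega_R \twoheadrightarrow \omega_R$; the reformulation then delivers $F$-injectivity.

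The converse is the substantive direction, which I would attack by emulating the Aberbach--Enescu splitting-prime construction. Define the ``$F$-injective splitting prime'' $P \subseteq R$ by analogy with \cite{AberbachEnescuStructureOfFPure}: roughly, $P$ is the unique prime cutting out the locus in $\operatorname{Spec}R$ where no surjection of the form $F^e_*\omega_R \twoheadrightarrow \omega_R$ can arise, constructed as the sum of all ideals $\mathfrak{a}$ for which every $\phi\in\operatorname{Hom}_R(F^e_*\omega_R,\omega_R)$ satisfies $\operatorname{Im}(\phi)\subseteq \mathfrak{a}\,\omega_R$ up to codimension $\ge 1$. When $R$ is $F$-injective, the reformulation forces $P\ne\mathfrak{m}$, so $\dim R/P \ge 1$. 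I would then show that $R/P$ inherits enough surjections to be $F$-rational, so that Theorem \ref{main}(3) applied to $R/P$ yields $s(\omega_{R/P})>0$, and finally lift a maximal family of surjections $F^e_*\omega_{R/P} \twoheadrightarrow \omega_{R/P}^n$ to genuine $R$-surjections $F^e_*\omega_R \twoheadrightarrow \omega_R^n$ with $n \ge c\,q^{\dim(R/P)+\alpha}$, giving $r_{\dim R/P}(\omega_R)>0$.

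The \textbf{main obstacle} is precisely this construction of $P$ and the accompanying lifting. Because $\omega_R$ is a module rather than a ring, the ideal-theoretic manipulations used by Aberbach--Enescu (forming ideals of the form $\{r:\phi(F^e_*r)\in\mathfrak{m}\}$, intersecting test ideals, and controlling compatibly split ideals) must be recast purely in terms of $\omega_R$-submodules, and the promotion of quotient-level surjections back to $R$ is not automatic since $\omega_R$ carries no natural ring structure. Moreover, the existence of the $\limsup$ in $r_i(\omega_R)$ as a genuine limit is not covered by \cite{TuckerFSignatureExists}, so securing positivity (rather than limit existence) at the critical exponent $i=\dim R/P$ is the technical heart of the argument.
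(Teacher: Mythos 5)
This statement is labeled a \emph{Conjecture} in the paper and the author provides no proof of it, so there is no argument in the source to compare against; it is posed as an open question. That said, your outline is worth assessing on its own terms, and it divides cleanly into a correct part and an unfinished part.

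The reformulation you propose, that for a Cohen--Macaulay $F$-finite ring $R$ is $F$-injective if and only if there is a surjection $F^e_*\omega_R \twoheadrightarrow \omega_R$ for some $e$, is a correct and essentially standard fact: Matlis-dualize the Frobenius action on $H^d_{\mathfrak{m}}(R)$, and for the converse your observation that every $\phi\in\operatorname{Hom}_R(F^e_*\omega_R,\omega_R)$ has the form $\phi_u(x)=\tau_e(u\cdot x)$, so $\operatorname{Im}\phi_u\subseteq\operatorname{Im}\tau_e$, forces surjectivity of the trace; the factorization $\tau_e=\tau_1\circ F_*\tau_{e-1}$ then pushes surjectivity down to $e=1$. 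Consequently your ``if'' direction of the conjecture, $r(\omega_R)>0\Rightarrow F$-injective, is fine.

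The genuine gap is in the ``only if'' direction, exactly where you flag it. Your plan hinges on (i) constructing an $\omega_R$-analog of the Aberbach--Enescu splitting prime $P$, (ii) showing $R/P$ is $F$-rational so that Theorem \ref{main}(3) gives $s(\omega_{R/P})>0$, and (iii) lifting surjections $F^e_*\omega_{R/P}\twoheadrightarrow\omega_{R/P}^n$ to $R$-surjections $F^e_*\omega_R\twoheadrightarrow\omega_R^n$ with $n$ of order $q^{\dim R/P+\alpha}$. None of these three steps is established, and each is a substantive claim. In the $F$-pure setting, the fact that the splitting prime cuts out a strongly $F$-regular quotient is a theorem of Blickle--Schwede--Tucker and uses the ideal-theoretic calculus of $\phi$-compatible ideals; because the dual $F$-signature tracks arbitrary surjections rather than split ones, and $\omega_R$ carries no multiplicative structure, that calculus does not transfer mechanically. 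Your item (ii) is itself an unproved conjecture-level analogue, not a reduction. So the proposal is a reasonable research program, and the forward implication is secure, but it is not a proof of the statement.
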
 

The case of $F$-splitting ratio was solved by considering about the pair of $F$-signature.
Not only for this application, it is important to consider about $the$ $dual$ $F$-$signature$ $of$ $pairs$.

\begin{defi}
Let $(R, \mathfrak{m}, k)$ be a reduced $F$-finite local ring  of characteristic $p>0$ and $M$ an $R$-module. Let $\mathscr{D}_M$ be a Cartier sub algebra of $M$. For each natural number $e$, Put $q=p^e, \alpha = log_p[k:k^p]$, and $ b_{q} = \max \{  \ n  \ |  \  ^\exists\phi:  F_*^eM \twoheadrightarrow  M^n, pr_i\cdot \phi \in \mathscr{D}_M $ for $ $any $ i \ \}$
and define 
\[
 s(M, \mathscr{D}_M) =\lim \sup_{e \to \infty}\frac{b_q}{q^{dimR+\alpha}}
 \]
We call $b_q$ $q$-th $F$-surjective number of $M$ along $\mathscr{D}_M$ and call $s(M, \mathscr{D}_M)$ the dual $F$-signature of the pair $M$ and $\mathscr{D}_M$.\\

\end{defi}

\begin{eg}
Let $(R, \mathfrak{m}, k)$ be a reduced $F$-finite local ring  of characteristic $p>0$ and $M$ be a finitely generated $R$-module. We denote $ \mathscr{D}^{sp}_{M}$ as the Cartier multiplicative closed sub set  such that all homogeneous elements consist of split-surjective maps. Then  $s(M, \mathscr{D}^{sp}_{M})$ is equivalent to the generalized $F$-signature of $M$ defined in \cite{HN}. More concretely, in the Example \ref{ver}, $s(I_l, \mathscr{D}^{sp}_{I_l})=1/n$.
\end{eg}

\begin{rem}
By using the canonical duality, we can easily check $s(R)=s(\omega_R,  \mathscr{D}^{sp}_{\omega_R})$.

\end{rem}

\begin{ques}
Can we generalize the results in Theorem \ref{main} by using the dual $F$-signature of pairs? 

\end{ques}


\begin{thebibliography}{BCHM}

\bibitem[AE05]{AberbachEnescuStructureOfFPure}
{\sc I.~M. Aberbach and F.~Enescu}: \emph{The structure of {$F$}-pure rings},
  Math. Z. \textbf{250} (2005), no.~4, 791--806. {\sf\scriptsize MR2180375}


\bibitem[AL03]{AberbachLeuschke}
{\sc I.~M. Aberbach and G.~J. Leuschke}: \emph{The {$F$}-signature and strong
  {$F$}-regularity}, Math. Res. Lett. \textbf{10} (2003), no.~1, 51--56.
  {\sf\scriptsize MR1960123 (2004b:13003)}

\bibitem[BST11]{BlickleSchwedeTuckerFsigPairs}
{\sc M.~Blickle, K.~Schwede, and K.~Tucker}: \emph{$F$-signature of pairs and the asymptotic behavior of Frobenius splittings}, 
preprint, arxiv:1107.1082

\bibitem[BST11]{BlickleSchwedeTuckerFsigPairs}
{\sc M.~Blickle, K.~Schwede, and K.~Tucker}: \emph{$F$-signature of pairs: Continuity, p-fractals and minimal log discrepancies}, 
preprint, arxiv:1111. 2762

  
\bibitem[G04]{gabber}
{\sc O. Gabber}: \emph{Notes on some t-structures}, Geometric aspects of Dwork theory. Vol. I, II,
 Walter de Gruyter GmbH \& Co. KG, Berlin, 2004, pp. 711-734. 

\bibitem[HH89]{HochsterHunekeTightClosureAndStrongFRegularity}
{\sc M.~Hochster and C.~Huneke}: \emph{Tight closure and strong
  {$F$}-regularity}, M\'em. Soc. Math. France (N.S.) (1989), no.~38, 119--133,
  Colloque en l'honneur de Pierre Samuel (Orsay, 1987). {\sf\scriptsize
  MR1044348 (91i:13025)}

\bibitem[HH90]{Hochster-Huneke:1990}
M.~Hochster and C.~Huneke, \emph{Tight closure, invariant theory, and
the
  {B}rian\c con-{S}koda theorem}, J. Amer. Math. Soc. \textbf{3} (1990),
  31--116.

\bibitem[HH94]{Hochster-Huneke:1994}
\bysame, \emph{F-regularity and smooth base change}, Trans. Amer. Math.
Soc.
  \textbf{346} (1994), 1--60.
  
  \bibitem[HN]{HN}
{\sc M. Hashimoto and Y. Nakajima}: \emph{Generalized F-signature of invariant sub rings} , In preparation


\bibitem[HR74]{HochsterRobertsRingsOfInvariants}
{\sc M.~Hochster and J.~L. Roberts}: \emph{Rings of invariants of reductive
  groups acting on regular rings are {C}ohen-{M}acaulay}, Advances in Math.
  \textbf{13} (1974), 115--175. {\sf\scriptsize MR0347810 (50 \#311)}

\bibitem[HR76]{HochsterRobertsFrobeniusLocalCohomology}
{\sc M.~Hochster and J.~L. Roberts}: \emph{The purity of the {F}robenius and
  local cohomology}, Advances in Math. \textbf{21} (1976), no.~2, 117--172.
  {\sf\scriptsize MR0417172 (54 \#5230)}

\bibitem[HY]{F-rat}
{\sc M.~Hochster and Y.~Yao}: \emph{The F-rational signature and drops in the Hilbert-Kunz multiplicity}
, preprint

\bibitem[Hun96]{HunekeTightClosureBook}
{\sc C.~Huneke}: \emph{Tight closure and its applications}, CBMS Regional
  Conference Series in Mathematics, vol.~88, Published for the Conference Board
  of the Mathematical Sciences, Washington, DC, 1996, With an appendix by
  Melvin Hochster. {\sf\scriptsize MR1377268 (96m:13001)}

\bibitem[HL02]{HunekeLeuschkeTwoTheoremsAboutMaximal}
{\sc C.~Huneke and G.~J. Leuschke}: \emph{Two theorems about maximal
  {C}ohen-{M}acaulay modules}, Math. Ann. \textbf{324} (2002), no.~2, 391--404.
  {\sf\scriptsize MR1933863 (2003j:13011)}


\bibitem[Kun69]{KunzCharacterizationsOfRegularLocalRings}
{\sc E.~Kunz}: \emph{Characterizations of regular local rings for
  characteristic {$p$}}, Amer. J. Math. \textbf{91} (1969), 772--784.
  {\sf\scriptsize MR0252389 (40 \#5609)}

\bibitem[Kun76]{KunzOnNoetherianRingsOfCharP}
{\sc E.~Kunz}: \emph{On {N}oetherian rings of characteristic {$p$}}, Amer. J.
  Math. \textbf{98} (1976), no.~4, 999--1013. {\sf\scriptsize MR0432625 (55
  \#5612)}

\bibitem[Mon83]{MonskyHKFunction}
{\sc P.~Monsky}: \emph{The {H}ilbert-{K}unz function}, Math. Ann. \textbf{263}
  (1983), no.~1, 43--49. {\sf\scriptsize MR697329 (84k:13012)}

\bibitem[Mon08]{MonskyIrrHKMultConj}
{\sc P.~Monsky}: \emph{Rationality of {H}ilbert-{K}unz multiplicities: a likely
  counterexample}, Michigan Math. J. \textbf{57} (2008), 605--613, Special
  volume in honor of Melvin Hochster. {\sf\scriptsize 2492471 (2010g:13026)}

\bibitem[Sch08]{SchwedeCentersOfFPurity}
{\sc K.~Schwede}: \emph{Centers of {$F$}-purity}, arXiv:0807.1654, to appear in
  Mathematische Zeitschrift.
  
\bibitem[SVdB97]{SmithVanDenBerghSimplicityOfDiff}
{\sc K.~E. Smith and M.~Van~den Bergh}: \emph{Simplicity of rings of
  differential operators in prime characteristic}, Proc. London Math. Soc. (3)
  \textbf{75} (1997), no.~1, 32--62. {\sf\scriptsize MR1444312 (98d:16039)}

\bibitem[Tuc]{TuckerFSignatureExists}
{\sc K.~Tucker}: \emph{{$F$}-signature exists}, Inventiones Mathematicae,
  1--23, 10.1007/s00222-012-0389-0.

\bibitem[WY04]{WatanabeYoshidaMinimalHKmultiplicity}
{\sc K.-i. Watanabe and K.-i. Yoshida}: \emph{Minimal relative {H}ilbert-{K}unz
  multiplicity}, Illinois J. Math. \textbf{48} (2004), no.~1, 273--294.
  {\sf\scriptsize 2048225 (2005b:13033)}


\bibitem[Yao06]{YaoObservationsAboutTheFSignature}
{\sc Y.~Yao}: \emph{Observations on the {$F$}-signature of local rings of
  characteristic {$p$}}, J. Algebra \textbf{299} (2006), no.~1, 198--218.
  {\sf\scriptsize MR2225772 (2007k:13007)}

\end{thebibliography}
\end{document}